\newcommand{\la}{\langle}
\newcommand{\ra}{\rangle}
\def\+{{\dagger}}
\def\CC{{\mathbb C}}
\def\NN{{\mathbb N}}
\def\RR{{\mathbb R}}
\def\ZZ{{\mathbb Z}}
\newcommand{\fH}{\mathfrak{H}}
\newcommand{\fL}{\mathfrak{L}}
\def\sL{{\mathscr L}}
\def\sR{{\mathscr R}}
\newtheorem{thm}{Theorem}
\newtheorem{lem}[thm]{Lemma}
\newtheorem{prop}[thm]{Proposition}
\title{Preconditioned Random Toeplitz Operators}
\author{W. F. Ke\footnotemark[2]
\and K. F. Lai\footnotemark[3]\ \footnotemark[5]
\and N. C. Wong\footnotemark[4]}
\begin{document}

\maketitle

\renewcommand{\thefootnote}{\fnsymbol{footnote}}\makeatletter
\footnotetext[2]{Department of Mathematics and National Center for Theoretical Sciences, National Cheng Kung University, Tainan, Taiwan (wfke@mail.ncku.edu.tw)}
\footnotetext[3]{School of Mathematical Sciences, Capital Normal University, Beijing 100048, China
(kinglaihonkon@gmail.com)}
\footnotetext[4]{Department of Applied Mathematics, National Sun Yat-sen University, Kaohsiung, Taiwan
(wong@math.nsysu.edu.tw)}
\footnotetext[5]{We thank the National Center of Theoretical Science for the support for a short visit during which this work is done.}

\renewcommand{\thefootnote}{\arabic{footnote}}

\begin{abstract}
  The solution of Hermitian positive definite random Toeplitz systems $Ax=b$ by the preconditioned
  conjugate gradient method for the Strang circulant preconditioner is studied. We established the foundation
  for this method by extending the work of
  Brown-Halmos on Toeplitz operators and
  Grenander-Szeg\"o on Teoplitz form to random Teoplitz operators.
\end{abstract}

\begin{keywords}
  Random Toeplitz operator, random circulant matrix, preconditioned conjugate gradient method
\end{keywords}

\begin{AMS}
65F10, 65F15, 60B20
\end{AMS}

\section{Introduction}

The aim of this paper is to show that we can  precondition a random Toeplitz operator to yield fast convergence  when conjugate gradient method is used for computation.

Preconditioned conjugate gradient (PCG) for solving finite dimensional linear system $Ax=b$ is a well established technique in numerical linear algebra (see for example \cite{Axe 94}, \cite{GV 96}).
When $A$ is a Toeplitz matrix very efficient preconditioners have been found (see \cite{Str 86}, \cite{CJ 07}).
Conjugate gradient methods also works for Hilbert spaces (see \cite{Dan 70},
\cite{Pan 04}).

Since the innovative work of Wigner \cite{Wig 55} random matrices with independent identically distributed (iid) random variables as entries have been intensively studied (see \cite{BS 10}, \cite{AGZ 09}, \cite{Tao 12}, \cite{PS 11}) with many useful applications
(see \cite{Meh 04}, \cite{BCC 09}).
The special class of random Toeplitz matrices, in particular their spectral measure
have been studied by Bryc and others (\cite{BDJ 06}).
 But there has been no work on their eigenvalue distributions in
relation to the properties of a generating function as given in the classic work of Grenander and Szeg\"o \cite{GS 58}.

 On the other hand we can consider random matrices as random linear operators as given in \cite{Sko 84}. This is the point of view we shall take. We shall establish for random Toeplitz operators the theoretical background used by Raymond Chan in his
 important work on circulant preconditioners for Toeplitz matrices, see in particular \cite{ChS 89}, \cite{ChR 89}, \cite{ChR 91}.

 The paper is divided into three parts. In the first part we shall formulate and prove for random matrices some theorems on
the distribution of eigenvalues of random matrices which are  standard
in the case of number matrices. We think it is useful to have these theorems written down and they
 will be used in subsequent papers on applications to numerical computations.
In part two we first extend the work of Brown and Halmos \cite{BH 63} on Toeplitz matrix to random Toeplitz operators.
And then we extend the results of Grenander and Szeg\"o \cite{GS 58} on Toeplitz forms to random Toeplitz operators.
These results are what we need for the extension of Raymond Chan's results to random Toeplitz operators.
In part three we apply the results of part two to show that that we can use the Strang's circulant to precondition a random Toeplitz operator and give some numerical examples.

\bigskip

\begin{center} Part I.
\end{center}

\section{Ordering eigenvalues}

In all standard discussions on the distribution of eigenvalues of a matrix one begins with ordering the eigenvalues as
$$\lambda_1 \leq \cdots \leq \lambda_n$$
In case of a matrix with functions as entries the "eigenvalues"
are themselves functions and it may not be possible to arrange these eigenvalue-functions in an order. So we begin with this problem.
There are other possible ways to deduce some of our results but our proofs are most direct and constructive. They also set up the favours of our program.

We fix a $\sigma$-finite measure space $(\Omega, \mu)$. By a $n \times n$ random matrix $A =(a_{ij})$ on $\Omega$ we shall mean simply that
the entries $a_{ij} : \Omega \to \CC$ are random variables on $\Omega$. We shall not impose the iid condition in the beginning. In this sense we consider $A$ as a random linear operator $A: \Omega \times \CC^n \to \CC^n$ given by
$A(\omega, v) = (a_{ij}(\omega))v$.

We begin with stating a simple lemma in measure theory.

\begin{lem}\label{lem:mpartition}
Let $\Omega = \bigcup_{i\geq1} \Omega_i$ be a covering of $\Omega$ by at most countably  many measurable subsets.
Set
$$
g_1 := 1_{\Omega_1},
$$
and
$$
g_i := 1_{\Omega_i}(1 - g_1-\cdots - g_{i-1}), \quad i=2,3,\ldots.
$$
Then
$$
1 = \sum_{i} g_i
$$
is a measurable partition of  unity of $X$ subordinating  to the (an, arbitrary but fixed,
ordered) family $\{\Omega_1, \Omega_2, \ldots\}$.
More precisely, $g_i = 1_{\Omega_i\setminus \cup_{j<i} \Omega_j}$,
and $g_ig_j=0$ for all
$i\neq j$.
\end{lem}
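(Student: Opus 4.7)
The plan is to prove the explicit formula $g_i = 1_{\Omega_i \setminus \bigcup_{j<i} \Omega_j}$ by induction on $i$, and then read off the remaining assertions as immediate corollaries.

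First I would observe that the entire content of the lemma is pointwise: each $g_i$ is defined by finitely many arithmetic operations on indicator functions of measurable sets, so measurability is automatic, and the identities can be checked at each $\omega \in \Omega$ individually. The base case $i=1$ is trivial since $\bigcup_{j<1} \Omega_j = \emptyset$ and $g_1 = 1_{\Omega_1}$ by definition. For the inductive step, assume $g_j = 1_{\Omega_j \setminus \bigcup_{k<j}\Omega_k}$ for every $j < i$. These sets are pairwise disjoint by construction, and their union is exactly $\bigcup_{j<i} \Omega_j$. Therefore
\[
g_1 + \cdots + g_{i-1} = 1_{\bigcup_{j<i} \Omega_j},
\]
so $1 - (g_1 + \cdots + g_{i-1}) = 1_{\Omega \setminus \bigcup_{j<i} \Omega_j}$. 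Multiplying by $1_{\Omega_i}$ yields $g_i = 1_{\Omega_i \cap (\Omega \setminus \bigcup_{j<i} \Omega_j)} = 1_{\Omega_i \setminus \bigcup_{j<i} \Omega_j}$, which closes the induction.

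From this explicit form, $g_i g_j = 0$ for $i \neq j$ follows because the supports $\Omega_i \setminus \bigcup_{k<i} \Omega_k$ and $\Omega_j \setminus \bigcup_{k<j} \Omega_k$ are disjoint (the one with larger index is excluded from the other's support by definition). Finally, the sets $\{\Omega_i \setminus \bigcup_{j<i} \Omega_j\}_i$ form a disjoint partition of $\bigcup_i \Omega_i = \Omega$, so $\sum_i g_i = 1_\Omega = 1$ pointwise, and this sum is in fact a finite sum at each point.

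There is essentially no obstacle: the only thing to be careful about is keeping the bookkeeping straight in the inductive step, namely that the sets produced at previous stages really are disjoint and really do exhaust $\bigcup_{j<i}\Omega_j$. Once that is established the rest is formal.
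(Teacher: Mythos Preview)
Your proof is correct and complete. The paper itself offers no proof for this lemma (it is stated and immediately marked with $\square$), so your inductive argument simply supplies the details the authors deemed obvious.
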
 $\square$

\begin{lem}\label{lem:basis}
Let $u_1, u_2, \ldots, u_m$ be 
be
measurable functions from  $\Omega$ into $\mathbb{R}^n$
 such that $$\{u_1(x), u_2(x),\ldots, u_m(x)\}$$ is an orthonormal subset of
$\mathbb{R}^n$ for each $x$ in $\Omega$.  Then we can 
find measurable functions $u_{m+1}, \ldots, u_n$
 from  $\Omega$ into $\mathbb{R}^n$ such that
$\{u_1(x), \ldots, u_n(x)\}$ is an orthonormal basis of $\mathbb{R}^n$ for every $x$ in $\Omega$.
\end{lem}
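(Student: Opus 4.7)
The plan is to use the countable measurable partition provided by Lemma~\ref{lem:mpartition} to reduce the extension problem to finitely many local pieces on which a Gram--Schmidt procedure can be carried out measurably. The central observation is that at each $x\in\Omega$, the orthonormal set $\{u_1(x),\ldots,u_m(x)\}$ can be enlarged to a basis of $\RR^n$ by adjoining $n-m$ of the standard basis vectors $e_1,\ldots,e_n$; the issue is only to make this choice in a measurable way that varies consistently across $\Omega$.

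For each $(n-m)$-element subset $I=\{i_1<\cdots<i_{n-m}\}$ of $\{1,\ldots,n\}$, define
$$
\Omega_I \;:=\; \bigl\{x\in\Omega : \{u_1(x),\ldots,u_m(x),e_{i_1},\ldots,e_{i_{n-m}}\}\text{ is linearly independent in }\RR^n\bigr\}.
$$
Linear independence is detected by non-vanishing of the determinant of the matrix whose columns are these $n$ vectors, and this determinant is a polynomial in the coordinates of the $u_j(x)$, hence a measurable function of $x$. Thus each $\Omega_I$ is measurable. Since at every $x$ the subspace spanned by $u_1(x),\ldots,u_m(x)$ has codimension $n-m$, a standard linear-algebra fact (a basis has a complement chosen from any spanning set) ensures $\Omega=\bigcup_I \Omega_I$, the union being finite (hence countable).

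Fix any ordering of the finitely many subsets $I$ and apply Lemma~\ref{lem:mpartition} to obtain measurable indicator functions $g_I$ with $\sum_I g_I = 1$ and $g_I g_J=0$ for $I\neq J$, supported on disjoint measurable sets $E_I := \Omega_I\setminus\bigcup_{J<I}\Omega_J$. On $E_I$, the ordered tuple
$$
\bigl(u_1(x),\ldots,u_m(x),e_{i_1},\ldots,e_{i_{n-m}}\bigr)
$$
is a basis of $\RR^n$, so classical Gram--Schmidt applied to this tuple (leaving the first $m$ vectors unchanged, since they are already orthonormal) yields orthonormal vectors $v_{m+1}^I(x),\ldots,v_n^I(x)$. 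Each step of Gram--Schmidt involves only inner products, scalar multiplication, subtraction, and normalization by a nonzero (on $E_I$) norm, so the resulting $v_k^I:E_I\to\RR^n$ is measurable. Finally set
$$
u_k(x) \;:=\; \sum_I g_I(x)\, v_k^I(x), \qquad k=m+1,\ldots,n,
$$
which is a well-defined measurable function on $\Omega$ (the sum is really a single term at each $x$), and by construction $\{u_1(x),\ldots,u_n(x)\}$ is an orthonormal basis of $\RR^n$ for every $x\in\Omega$.

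The only point that requires care is the measurability of the Gram--Schmidt output on each piece $E_I$, and that is immediate because the arithmetic operations and the square-root appearing in normalization are continuous, and we work on the set where the intermediate norms are strictly positive. I do not expect any genuine obstacle; the proof is essentially a bookkeeping reduction of pointwise linear algebra to measurability via the partition lemma, which is exactly the kind of application Lemma~\ref{lem:mpartition} was set up for.
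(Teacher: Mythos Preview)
Your argument is correct and follows essentially the same route as the paper: partition $\Omega$ by which $(n-m)$-tuple of standard basis vectors completes $\{u_1(x),\ldots,u_m(x)\}$ to a basis (detected by the non-vanishing of a determinant), apply Gram--Schmidt on each piece, and glue via the measurable partition of unity from Lemma~\ref{lem:mpartition}. The only cosmetic difference is that the paper phrases the gluing as $u_{m+k}=\sum_\gamma g_\gamma f^\gamma_{m+k}$ without explicitly passing to the disjoint sets $E_I$, but the content is identical.
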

\begin{proof}
Consider the standard orthonormal basis $\{\widehat{e}_1, \ldots, \widehat{e}_n\}$ consisting of constant vector-valued functions
$\widehat{e}_k(x) = e_k\in\mathbb{R}^n, \forall x\in X$.
For each choice of $n-m$ of them, $\gamma :=\{\widehat{e}_{\gamma_1}, \ldots, \widehat{e}_{\gamma_{m-n}}\}$, the subset
$X_\gamma$
of $x$ in $\Omega$ at which
$$
\{u_1(x), u_2(x),\ldots, u_m(x), \widehat{e}_{\gamma_1}(x), \ldots, \widehat{e}_{\gamma_{m-n}}(x)\}
$$
forms a basis of $\mathbb{R}^n$ is measurable.  Indeed, it is the cozero set of the measurable function
$$\det(u_1(x), u_2(x),\ldots, u_m(x), \widehat{e}_{\gamma_1}(x), \ldots, \widehat{e}_{\gamma_{m-n}}(x)).$$
By the Gram-Schmidt process, we can transform the linearly independent set to an orthonormal basis
$$
\{u_1(x), u_2(x),\ldots, u_m(x), f^\gamma_{m+1}(x), \ldots, f^\gamma_{n}(x)\}
$$
of $\mathbb{R}^n$ for each
$x$ in $\Omega_\gamma$ such that the functions $f^\gamma_{m+1}(x), \ldots, f^\gamma_{n}(x)$ are defined and
measurable on $\Omega_\gamma$.  Subordinating to the measurable covering $\Omega=\bigcup_\gamma \Omega_\gamma$, a measurable
partition of unity is given by
$$
1 = \sum_\gamma g_\gamma.
$$
Define
$$
u_{m+k}(x) := \sum_\gamma g_\gamma(x)f^\gamma_{m+k}, \quad k = 1, \ldots, n-m.
$$
We have all such $u_{m+k}$ measurable on $\Omega$, and together with $u_1,\ldots, u_m$ they form
an orthonormal family we want.
\end{proof}

\begin{lem}\label{lem:max-eigen}
Let $a_1, a_2, \ldots, a_n$ be
measurable functions from  $\Omega$ into $\mathbb{R}^n$.  Suppose that
the random matrix $A(x)=\left[ a_1(x)\ a_2(x)\ \ldots\ a_n(x)\right]$ is symmetric for each $x$ in $\Omega$.
Then the spectral radius  $r(A(x))$ and the maximal eigenvalue  $\lambda_{\max}(A(x))$
of the random matrix $A(x)$ are both measurable functions on $\Omega$.
\end{lem}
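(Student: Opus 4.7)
The plan is to avoid a direct root-finding argument on the characteristic polynomial and instead exploit the variational characterization of the extremal eigenvalues of a symmetric matrix. Since $A(x)$ is symmetric for each $x \in \Omega$, the Rayleigh quotient formulation gives
$$
\lambda_{\max}(A(x)) \;=\; \sup_{\|v\|=1,\ v\in\mathbb{R}^n}\,\langle A(x)v, v\rangle,
$$
and analogously $\lambda_{\min}(A(x)) = \inf_{\|v\|=1}\langle A(x)v, v\rangle$. Because $A(x)$ has real eigenvalues, the spectral radius is $r(A(x)) = \max\{|\lambda_{\max}(A(x))|, |\lambda_{\min}(A(x))|\}$.

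First I would fix any countable dense subset $D$ of the unit sphere $S^{n-1}\subset \mathbb{R}^n$. For each fixed $v \in D$, the map
$$
x\ \longmapsto\ \langle A(x)v, v\rangle \;=\; \sum_{i,j=1}^n a_{ij}(x)\,v_i v_j
$$
is a finite linear combination of the measurable entries $a_{ij}$ of $A(x)$, hence measurable. Next I would verify that the supremum can be restricted to $D$: for each fixed $x$, the quadratic form $v \mapsto \langle A(x)v, v\rangle$ is continuous on the compact sphere $S^{n-1}$, so its supremum over $S^{n-1}$ equals its supremum over the dense subset $D$. Consequently
$$
\lambda_{\max}(A(x)) \;=\; \sup_{v\in D} \langle A(x)v, v\rangle,
$$
exhibited as a countable supremum of measurable functions, is measurable. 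The same reasoning applied to $-A(x)$ (or to the infimum over $D$) shows $\lambda_{\min}(A(x))$ is measurable. Then $r(A(x))$, being the pointwise maximum of two measurable functions, is measurable.

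The argument is essentially routine once the variational principle and the continuity-plus-density reduction are in hand; the only place that requires a moment of care is the passage from the sup over $S^{n-1}$ to a countable sup over $D$, where one must invoke continuity of $v\mapsto\langle A(x)v,v\rangle$ for each \emph{fixed} $x$ (not joint continuity in $(x,v)$, which is not available). Neither Lemma~\ref{lem:mpartition} nor Lemma~\ref{lem:basis} is needed here; those lemmas will serve in the subsequent measurable-selection step where one produces the full ordered sequence of eigenvalues and corresponding eigenvectors.
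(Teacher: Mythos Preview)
Your argument is correct, but it proceeds in the opposite order from the paper's proof. You obtain $\lambda_{\max}(A(x))$ first, via the Rayleigh quotient and a countable dense subset of the unit sphere, and then recover $r(A(x))$ as $\max\{|\lambda_{\max}|,|\lambda_{\min}|\}$. The paper instead establishes the measurability of $r(A(x))$ first, using Gelfand's formula $r(A(x))=\limsup_n \|A(x)^n\|^{1/n}$ with the Hilbert--Schmidt norm (so that $r$ is a $\limsup$ of measurable functions), and then observes that the shifted matrix $A(x)+r(A(x))I$ is positive semidefinite, whence its spectral radius coincides with its largest eigenvalue; subtracting $r(A(x))$ back yields $\lambda_{\max}(A(x))$. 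Your route is arguably more elementary and self-contained, needing only the pointwise (deterministic) Rayleigh--Ritz principle and the continuity-plus-density reduction you flagged. The paper's route has the minor advantage that the Gelfand argument for $r(A(x))$ does not use symmetry at all, and it avoids any appeal to a variational principle at this stage (the random Rayleigh--Ritz statement appears only later, as Lemma~\ref{lem:RR-CF}, after the ordered eigenvalue functions have been constructed).
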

\begin{proof}
We note that all matrix norms on $\mathbb{R}^{n\times n}$ are equivalent, and they give the  spectral
radius by
$$
r(A(x)) = \limsup \|A(x)^n\|^{1/n}, \quad\forall x\in \Omega.
$$
If we use, for example, the Hilbert-Schmidt  norm $\|A\| := (\sum_{i,j} |a_{ij}|^2)^{1/2}$, then we see that
$r(A(x))$ is a measurable function on
$\Omega$.  Note also that $r(A(x))$ equals the operator norm of $A(x)$.  Consequently, the function
$$
x\mapsto r(A(x)+ r(A(x))I)
$$
is also measurable on $\Omega$ and gives rise to the maximal eigenvalue $\lambda_{\max}(A(x) + r(A(x))I)$ of
the positive matrix $A(x) + r(A(x))I$.  Since $\lambda_{\max}(A(x)) = \lambda_{\max}(A(x) + r(A(x))I) - r(A(x))$,
we obtain the measurability of $\lambda_{\max}(A(\Omega))$.
\end{proof}

\begin{lem}\label{lem:rank1}
Let  $A(x)=\left[ a_1(x)\ a_2(x)\ \ldots\ a_n(x)\right]$ be a real symmetric random matrix on $\Omega$ such that
$A(x)$ has rank at most one at every $x$.  Then we can order the eigenvalues $\lambda_1(x)\leq \cdots\leq \lambda_n(x)$
of $A(x)$ as measurable functions on $\Omega$ with corresponding measurable eigenvector functions $u_1(x), \ldots, u_n(x)$
which form an orthonormal basis of $\mathbb{R}^n$ for every $x$ in $\Omega$.
\end{lem}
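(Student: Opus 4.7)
The plan is to exploit the rank-$\leq 1$ hypothesis: at every $x$, the symmetric matrix $A(x)$ has at most one nonzero eigenvalue, which must equal its trace $\mu(x) := \sum_i a_{ii}(x)$, while the remaining $n-1$ eigenvalues are $0$. Hence the ordered eigenvalues are given explicitly by
\[
\lambda_1(x) := \min(\mu(x),0), \qquad \lambda_2(x) := \cdots := \lambda_{n-1}(x) := 0, \qquad \lambda_n(x) := \max(\mu(x),0),
\]
and these are manifestly measurable since $\mu$ is a finite sum of measurable functions.

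For the eigenvectors, I would first build a measurable unit eigenvector $u(x)$ corresponding to $\mu(x)$, and then extend to an orthonormal basis by invoking Lemma \ref{lem:basis}. To produce $u$, I would use the rank-one structure: when $\mu(x)\neq 0$ we can write $A(x) = \mu(x)\, v(x)v(x)^T$ for some unit vector $v(x)$, so each column $a_i(x) = \mu(x)v_i(x)v(x)$ is a scalar multiple of $v(x)$, and hence any nonzero column, once normalized, differs from $v(x)$ only by a sign. Consider the measurable cover
\[
\Omega = \Omega_0 \cup \Omega_1 \cup \cdots \cup \Omega_n, \quad \Omega_0 := \{x : A(x)=0\}, \ \Omega_i := \{x : a_i(x)\neq 0\},
\]
and apply Lemma \ref{lem:mpartition} to get an associated partition of unity $\{g_0, g_1, \ldots, g_n\}$. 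Define
\[
u(x) := g_0(x)\, e_1 + \sum_{i=1}^{n} g_i(x)\, \frac{a_i(x)}{\|a_i(x)\|},
\]
with each summand interpreted as $0$ outside the support of its coefficient. Then $u$ is measurable and of unit length. A direct check using $a_i = \mu v_i v$ shows $u(x) = \pm v(x)$ on each $\Omega_i\setminus\bigcup_{j<i}\Omega_j$, whence $A(x)u(x) = \mu(x) u(x)$ everywhere (trivially on $\Omega_0$).

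Once $u$ is in hand, Lemma \ref{lem:basis} furnishes measurable $w_1(x),\ldots,w_{n-1}(x)$ completing $\{u(x)\}$ to an orthonormal basis of $\mathbb{R}^n$; since each $w_k(x)\perp u(x) = \pm v(x)$, we have $A(x)w_k(x) = \mu(x)(v(x)^T w_k(x))\, v(x) = 0$, so each $w_k(x)$ is a measurable eigenvector for the eigenvalue $0$. To match the prescribed ordering, split $\Omega$ into the measurable pieces $\Omega_+ := \{\mu \geq 0\}$ and $\Omega_- := \{\mu < 0\}$: on $\Omega_+$ set $u_n := u$ and $u_k := w_k$ for $k=1,\ldots,n-1$, while on $\Omega_-$ set $u_1 := u$ and $u_{k+1} := w_k$ for $k=1,\ldots,n-1$. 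Patching over these two measurable sets yields the desired measurable orthonormal basis.

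The only nonroutine point---the main obstacle---is fabricating a globally measurable unit eigenvector for $\mu(x)$ as $x$ crosses the locus where $A$ degenerates from rank one to the zero matrix; the column-normalization together with the partition-of-unity Lemma \ref{lem:mpartition} is what makes this construction work uniformly in $x$.
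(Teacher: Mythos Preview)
Your proof is correct and follows essentially the same route as the paper: the identical column-normalization formula $u(x)=g_0(x)e_1+\sum_i g_i(x)\,a_i(x)/\|a_i(x)\|$ built from the partition of unity of Lemma~\ref{lem:mpartition}, followed by the extension Lemma~\ref{lem:basis} and the split into $\Omega_+$ and $\Omega_-$. The only cosmetic difference is that you write the ordered eigenvalues explicitly as $\min(\mu,0),0,\ldots,0,\max(\mu,0)$ and handle both signs at once, whereas the paper first reduces to the positive semidefinite case and then treats $-A$ on $\Omega_-$.
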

\begin{proof}
Assume  $A(x)\geq 0$. i.e., positive (semi-definite), for every $x$ in $X$ first.  Let
$$
X_0 =\{x\in X: A(x)=0\}.
$$
On its complement, let
$$
X_k = \{x\in X: a_k(x)\neq 0\}, \quad k=1,\ldots, n.
$$
Clearly,  $\{\Omega_0,X_1,\ldots, \Omega_n\}$
forms a measurable covering of $\Omega$.
Let $1 = \sum_{i=0}^n g_i$ be the measurable partition subordinating to this covering as in Lemma \ref{lem:mpartition}.
If $x\in \Omega_k$ for some $k >0$ then $a_k(x)$ is an eigenvector of $A(x)$ for the maximal, and the unique positive,
eigenvalue $\lambda_n(x)$ of $A(x)$.  Define
$$
u_n(x) = g_0(x)\hat{e}_1(x) + \sum_{i=1}^n g_i \frac{a_i(x)}{\|a_i(x)\|}.
$$
Then we have a measurable function from $\Omega$ into $\mathbb{R}^n$ such that $\|u_n(x)\|=1$ everywhere on $\Omega$.
By Lemma \ref{lem:basis}, we can enlarge it to have a measurable orthonormal basis
$$
\{u_1(x), u_2(x), \ldots, u_n(x)\}
$$
on $X$.  We set $\lambda_1=\cdots=\lambda_{n-1}= 0$.
Note that $\lambda_n$ vanishes on $\Omega_0$ and takes strictly positive values elsewhere.

In the general case,
let $\Omega_+$ (resp.\ $\Omega_-$) be the subset of $\Omega$ of those $x$ at which the trace of $A(x)$ is non-negative (resp.\ non-positive).
We divide $\Omega$ into a  measurable union $\Omega= \Omega_+\cup \Omega_-$ such that
$A(x)\geq 0$ on $X_+$, and $-A(x)\geq 0$ on $\Omega_-$.
Applying above arguments separately on $\Omega_+$ and $\Omega_-$, and
gluing the results together with Lemma \ref{lem:mpartition}, we obtain the assertion.
\end{proof}

\begin{thm}\label{thm:order-eigen}
Let   $A(x)=\left[ a_1(x)\ a_2(x)\ \ldots\ a_n(x)\right]$ be a real symmetric random matrix on $\Omega$.
 Then we can order the eigenvalues $\lambda_1(x)\leq \cdots\leq \lambda_n(x)$
of $A(x)$ as measurable functions on $\Omega$ with corresponding measurable eigenvector functions $u_1(x), \ldots, u_n(x)$
which form an orthonormal basis of $\mathbb{R}^n$ for every $x$ in $\Omega$.
\end{thm}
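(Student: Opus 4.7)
The plan is to proceed by induction on $n$. The case $n=1$ is immediate, since $A(x)$ is then a measurable scalar which is its own eigenvalue, with $u_1(x)=1$. Assuming the statement for size $n-1$, I would reduce the $n\times n$ case to the $(n-1)\times (n-1)$ case by peeling off one measurable eigenpair associated with the largest eigenvalue $\lambda_n(x):=\lambda_{\max}(A(x))$, which is measurable by Lemma~\ref{lem:max-eigen}.

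The crux is to produce a measurable unit eigenvector $u_n(x)$ for $\lambda_n(x)$. I would first realise the orthogonal projection $P_n(x)$ onto the eigenspace $E_n(x)=\ker(A(x)-\lambda_n(x)I)$ as a pointwise limit of measurable matrix functions via power iteration on a shifted positive-definite matrix. Set $B(x):=A(x)+(r(A(x))+1)I$, whose entries are measurable by Lemma~\ref{lem:max-eigen} and which satisfies $B(x)\geq I$. Its top eigenvalue $\mu(x)=\lambda_n(x)+r(A(x))+1\geq 1$ is measurable, and in the spectral decomposition $B(x)=\sum_i \mu_i(x)Q_i(x)$ with $\mu(x)=\mu_1(x)>\mu_2(x)>\cdots\geq 1$ we have
$$
P_n(x)\;=\;\lim_{m\to\infty}\bigl(B(x)/\mu(x)\bigr)^m\;=\;Q_1(x),
$$
the convergence being entrywise pointwise. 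Each approximant $(B(x)/\mu(x))^m$ is a rational function in the measurable entries of $A(x)$, so $P_n(x)$ is measurable. To select a measurable unit vector in $\operatorname{range} P_n(x)$ I would recycle the partition-of-unity trick used in Lemma~\ref{lem:rank1}: the sets $\Omega_i:=\{x\in\Omega: P_n(x)e_i\neq 0\}$ cover $\Omega$ because $P_n(x)\neq 0$ everywhere, and with the subordinate partition $1=\sum_i g_i$ supplied by Lemma~\ref{lem:mpartition} the function
$$
u_n(x)\;:=\;\sum_{i=1}^{n} g_i(x)\,\frac{P_n(x)e_i}{\|P_n(x)e_i\|}
$$
is measurable and is a unit vector in $E_n(x)$ for every $x$ (on each piece of the partition exactly one term contributes).

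With the pair $(\lambda_n,u_n)$ secured, Lemma~\ref{lem:basis} extends $u_n$ to a measurable orthonormal basis $\{v_1(x),\dots,v_{n-1}(x),u_n(x)\}$ of $\mathbb{R}^n$. Letting $V(x):=[v_1(x)\ \cdots\ v_{n-1}(x)]$, the matrix $A'(x):=V(x)^T A(x)V(x)$ is an $(n-1)\times(n-1)$ symmetric random matrix whose entries are measurable and whose eigenvalues are the remaining eigenvalues of $A(x)$ (since $u_n(x)\perp \operatorname{range}V(x)$ and is an eigenvector of $A(x)$). Applying the induction hypothesis to $A'(x)$ yields a measurable ordering $\lambda_1(x)\leq\cdots\leq\lambda_{n-1}(x)$ with measurable orthonormal eigenvectors $u'_1(x),\dots,u'_{n-1}(x)$ in $\mathbb{R}^{n-1}$. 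Pulling back via $u_i(x):=V(x)u'_i(x)$ for $i=1,\dots,n-1$ gives the desired measurable orthonormal eigenbasis for $A(x)$, with $\lambda_i(x)\leq\lambda_n(x)$ by maximality of $\lambda_n$.

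The main obstacle is the eigenvector selection in Step~2: measurably choosing a unit vector inside the eigenspace $E_n(x)$, whose dimension may jump with $x$ and for which no constant coordinate vector $e_i$ need lie off its kernel uniformly in $x$. The power-iteration construction of $P_n(x)$ avoids any need to track the multiplicity or to disentangle distinct eigenvalues, and the partition-of-unity selection from Lemmas~\ref{lem:mpartition} and~\ref{lem:rank1} finishes the selection. Everything else is bookkeeping afforded by Lemmas~\ref{lem:basis} and~\ref{lem:max-eigen} together with the induction.
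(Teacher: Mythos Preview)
Your argument is correct and follows essentially the same route as the paper: induction on $n$, measurability of $\lambda_{\max}$ via Lemma~\ref{lem:max-eigen}, power iteration on a positive shift of $A(x)$ to obtain the top eigenprojection $P_n(x)$ as a pointwise limit, the partition-of-unity selection of a measurable unit eigenvector, extension via Lemma~\ref{lem:basis}, and reduction to size $n-1$ by orthogonal conjugation. Your choice to shift by $r(A(x))+1$ rather than $r(A(x))$ is a minor streamlining, since it makes $B(x)\geq I$ everywhere and so removes the need to treat the set $\{\lambda_n=0\}$ separately as the paper does.
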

\begin{proof}
By Lemma \ref{lem:rank1}, the assertion holds for the case $n=1$.
Assume by induction the assertion is valid for all dimension less than $n$.
Suppose also that $A(x)\geq 0$ everywhere on $\Omega$.
By Lemma \ref{lem:max-eigen}, we see that the maximal eigenvalue $\lambda_n(x)$ of $A(x)$ is a
measurable function on $\Omega$.
We are going to show that we can find an associated measurable eigenvector function $u_n(x)$ of $A(x)$ of norm one everywhere
on $\Omega$.

For each $x$ in $\Omega$, let
$$
1=P_n(x) + P_{n-1}(x) + \cdots + P_{n-m}(x)
$$
be the orthogonal sum of eigenprojections of $A(x)$
for distinct eigenvalues $\lambda_n(x)> \lambda_{n-1}(x) > \cdots > \lambda_{n-m}(x)$.
In particular, $P_n(x)$ is the (nonzero) orthogonal projection of $\mathbb{R}^n$ onto
the eigenspace of $A(x)$ for the maximal eigenvalue $\lambda_n(x)$.
Note that $m=m(x)$ depends on $x$.

Let $X_0 =\{x\in X: \lambda_n(x)=0\}$, which is a measurable set.
Let $x\in X\setminus X_0$.  For each $k=1,2,\ldots, n$,
write
$$
e_k = P_n(x)e_k + P_{n-1}(x)e_k + \cdots + P_{n-m}(x)e_k.
$$
Applying $A(x)$ on both sides repeatedly, we get
\begin{align*}
A(x)^q e_k &= \lambda_n(x)^q P_n(x)e_k + \lambda_{n-1}(x)^q P_{n-1}(x)e_k + \cdots + \lambda_{n-m}(x)^qP_{n-m}(x)e_k,
\end{align*}
for $q=1,2,\ldots$.
Consequently,
$$
\frac{A(x)^q e_k}{\lambda_n(x)^q} \longrightarrow P_n(x)e_k, \quad \text{as } q\to\infty.
$$
As a pointwise limit of measurable functions,
$P_n(x)e_k$ is measurable from  $X\setminus \Omega_0$
into $\mathbb{R}^n$.  Consequently, its cozero set
$\Omega_k = \{x\in \Omega\setminus \Omega_0: P_n(x)e_k \neq 0\}$
is measurable for $k=1,2,\ldots, n$.
Clearly, $\Omega = \Omega_0\cup \Omega_1\cup\cdots\cup \Omega_n$ is a measurable covering of $\Omega$.
Using Lemma \ref{lem:mpartition}, we have an
 associated
measurable partition of unity
$$
1=g_0 + g_1 + \cdots +g_n.
$$
We can then define a measurable function
$$
u_n(x) = \frac{g_0(x)e_1 + \sum_{k=1}^n g_k(x)P_n(x)e_k}{\|g_0(x)e_1 + \sum_{k=1}^n g_k(x)P_n(x)e_k\|},
$$
which is of norm one everywhere on $\Omega$.

By Lemma \ref{lem:basis}, we can enlarge $\{u_n(x)\}$ to a measurable orthonormal random  basis
$$
\{u_1(x), u_2(x), \ldots, u_n(x)\}
$$
of $\mathbb{R}^n$ everywhere on $\Omega$.
Define an orthogonal matrix $U = \left[u_1(x)\ u_2(x)\ \cdots\ u_n(x)\right]$.
Then
$$
U^t(x) A(x)U(x) = \left(
                    \begin{array}{cc}
                      A'(x) & 0 \\
                      0 & \lambda_n(x)\\
                    \end{array}
                  \right), \quad\forall x\in \Omega.
$$
Here, $A'(x)$ is an $(n-1)\times(n-1)$ positive semi-definite real matrix.
By the induction hypothesis, we can order its $n-1$ eigenvalues
$$
\lambda_1(x) \leq \cdots \leq \lambda_{n-1}(x), \quad\forall x\in \Omega,
$$
as measurable functions on $\Omega$.  Clearly, $\lambda_{n-1}(x)\leq \lambda_n(x)$ everywhere on $\Omega$.

In general, we consider the random matrix $B(x) = A(x)+ r(A(x))I \geq 0$.
Let the ordered measurable eigenfunctions of $B$ be $g_1, \ldots, g_n$.
Then, the eigenvalue functions of $A$ are given by
$\lambda_k(x) = g_k(x) - r(A(x))$ for $k=1,\ldots, n$.
Moreover, $A$ and $B$ share the same set of measurable eigenvector functions.
\end{proof}

We remark that with slight modification the above arguments also work for the complex case.

By Theorem \ref{thm:order-eigen}, we can arrange the (all real) eigenvalues of an $n\times n$ random Hermitian matrix $A(x)$ in an increasing order
$$
\lambda_{\min}(x) = \lambda_1(x) \leq \lambda_2(x) \leq \cdots \leq \lambda_n(x) = \lambda_{\max}(x).
$$
We write $\lambda_k(A(x))$ if the random matrix $A$ is to be emphasized.

The following two lemmas are direct applications of Theorem \ref{thm:order-eigen} and the theorems of Rayleigh-Ritz,
Courant-Fischer, and Weyl.

It is now clear that the standard proofs of  the theorems of 
Rayleigh-Ritz, Courant-Fischer and Weyl
in \cite{HJ 90}, \cite{GV 96}
extend to random matrices. Together with Theorem \ref{thm:order-eigen} 
we obtain the following lemmas.

\begin{lem}\label{lem:RR-CF}
Let $A(x)$ be an $n\times n$ Hermitian random matrix on $\Omega$. Then
\begin{align*}
\lambda_{\max}(x) &= \max_{v} \frac{v(x)^*A(x)v(x)}{v(x)^*v(x)} = \max_{v(x)^*v(x)=1} v(x)^*A(x)v(x),\\
\lambda_{\min}(x) &= \min_{v} \frac{v(x)^*A(x)v(x)}{v(x)^*v(x)} = \min_{v(x)^*v(x)=1} v(x)^*A(x)v(x), \quad\forall x\in \Omega.
\end{align*}
Here, $v$ runs through all non-vanishing random vectors, i.e., measurable functions
from $\Omega$ into $\mathbb{C}^n$.  In general, for $1\leq k\leq n$ we have
\begin{align*}
\lambda_k(x)
&= \min_{w_1, \ldots, w_{n-k}}\ \max_{v(x)\bot w_1(x), \ldots, w_{n-k}(x)}
\frac{v(x)^*A(x)v(x)}{v(x)^*v(x)}\\
&= \max_{w_1, \ldots, w_{n-k}}\ \min_{v(x)\bot w_1(x), \ldots, w_{n-k}(x)}
\frac{v(x)^*A(x)v(x)}{v(x)^*v(x)}, \quad\forall x\in \Omega,
\end{align*}
where $w_1,\ldots, w_{n-k}$ and $v$ run through all non-vanishing random vectors.
\end{lem}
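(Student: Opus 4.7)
The plan is to leverage Theorem~\ref{thm:order-eigen} to convert the classical Rayleigh--Ritz and Courant--Fischer arguments into measurable statements. Theorem~\ref{thm:order-eigen} supplies ordered measurable eigenvalue functions $\lambda_1(x)\le\cdots\le\lambda_n(x)$ together with a measurable orthonormal eigenframe $u_1(x),\ldots,u_n(x)$. For any vector $v\in\mathbb{C}^n$ the expansion $v=\sum_j c_j u_j(x)$ yields
$$
\frac{v^*A(x)v}{v^*v}=\frac{\sum_j\lambda_j(x)|c_j|^2}{\sum_j|c_j|^2},
$$
so the pointwise Rayleigh--Ritz and Courant--Fischer identities follow verbatim from the classical arguments. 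The remaining work is to verify that the max/min taken over \emph{measurable} random vectors, rather than over all vectors in $\mathbb{C}^n$, recovers the pointwise extrema.

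For the Rayleigh--Ritz formulas, the bounds $\lambda_{\min}(x)\le v(x)^*A(x)v(x)/v(x)^*v(x)\le\lambda_{\max}(x)$ are immediate pointwise consequences of the expansion above and hold for every measurable random vector $v$ that does not vanish on $\Omega$. Equality in the upper bound is attained at $v=u_n$ and in the lower bound at $v=u_1$; both are measurable by Theorem~\ref{thm:order-eigen}, so the supremum and infimum are achieved by measurable random vectors.

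For the Courant--Fischer min-max formula at level $k$, one inequality is witnessed by the measurable choice $w_i:=u_{k+i}$, $i=1,\ldots,n-k$: the constraint $v(x)\perp w_i(x)$ forces $v(x)\in\mathrm{span}(u_1(x),\ldots,u_k(x))$, so the Rayleigh quotient is at most $\lambda_k(x)$, with equality attained at the measurable vector $v=u_k$. For the opposite inequality I have to show that for \emph{every} measurable choice of $w_1,\ldots,w_{n-k}$ there is a measurable, everywhere nonzero random vector $v$ satisfying $v(x)\in\mathrm{span}(u_k(x),\ldots,u_n(x))$ and $v(x)\perp w_i(x)$ for all $i$ and all $x$, since such a $v$ automatically has Rayleigh quotient at least $\lambda_k(x)$ pointwise. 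The accompanying max-min identity is treated by an analogous symmetric argument with the roles of the lower and upper parts of the eigenframe exchanged.

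The main obstacle is the measurable selection of the vector $v$ described in the previous paragraph. Pointwise existence of a nonzero $v$ in $\mathrm{span}(u_k(x),\ldots,u_n(x))\cap\{w_1(x),\ldots,w_{n-k}(x)\}^\perp$ is guaranteed by the dimension count $(n-k+1)+k\ge n+1$. To make the selection measurable, I would expand $v=\sum_{j=k}^n c_j u_j$ and recast the orthogonality conditions as the homogeneous linear system $M(x)c(x)=0$, where $M(x)$ is the $(n-k)\times(n-k+1)$ matrix with measurable entries $\la u_j(x),w_i(x)\ra$. Stratifying $\Omega$ by the measurable cozero sets of the various square minors of $M$, together with Cramer's rule on each stratum (dropping to smaller minors on sets where the rank of $M$ falls below $n-k$), produces an explicit measurable nonzero element of $\ker M(x)$ on each stratum. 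Gluing these local solutions via Lemma~\ref{lem:mpartition} and normalizing produces the required measurable $v$; the complex Hermitian case follows from the modification remarked on immediately after Theorem~\ref{thm:order-eigen}.
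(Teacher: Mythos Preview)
Your proposal is correct and follows the same route the paper indicates: the paper gives no detailed proof, merely remarking that ``the standard proofs of the theorems of Rayleigh--Ritz, Courant--Fischer and Weyl \ldots\ extend to random matrices'' once Theorem~\ref{thm:order-eigen} supplies the measurable ordered eigenframe. Your argument is a faithful and more careful elaboration of exactly this, and in particular your measurable-selection step for the reverse Courant--Fischer inequality (stratifying by minor ranks of $M(x)$ and gluing via Lemma~\ref{lem:mpartition}) fills in a genuine detail that the paper leaves implicit.
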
 $\square$

\begin{lem}\label{lem:Weyl}
Let $A,B$ be $n\times n$ Hermitian random matrices on $\Omega$.
Let the eigenvalue functions $\lambda_j(A(x)), \lambda_j(B(x))$ and $\lambda_j(A(x) + B(x))$ be arranged in increasing order. Then
$$
\lambda_k(A(x)) + \lambda_1(B(x)) \leq \lambda_k(A(x)+B(x)) \leq \lambda_k(A(x)) + \lambda_n(B(x)), \quad\forall x\in X.
$$
\end{lem}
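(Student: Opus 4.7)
The plan is to reduce the two inequalities to the classical Weyl inequality for Hermitian matrices, applied pointwise at each $x \in \Omega$. By Theorem \ref{thm:order-eigen}, the ordered eigenvalue functions $\lambda_k(A(x))$, $\lambda_k(B(x))$, and $\lambda_k(A(x)+B(x))$ are already measurable, so both sides of each asserted inequality are measurable functions on $\Omega$, and proving a pointwise inequality is enough. In particular, once I fix $x$, the random vectors $v, w_1, \ldots, w_{n-k}$ appearing in the Courant--Fischer characterization of Lemma \ref{lem:RR-CF} collapse to ordinary vectors in $\mathbb{C}^n$ at that single point, and no measurable selection of optimizers is required.

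For the upper bound I fix $x \in \Omega$ and use Lemma \ref{lem:RR-CF} to write
$$
\lambda_k(A(x)+B(x)) \;=\; \min_{w_1,\ldots,w_{n-k}}\;\max_{\substack{v\neq 0\\ v\perp w_1,\ldots,w_{n-k}}}\;\frac{v^{*}(A(x)+B(x))v}{v^{*}v}.
$$
Splitting the Rayleigh quotient as $\frac{v^{*}A(x)v}{v^{*}v} + \frac{v^{*}B(x)v}{v^{*}v}$ and bounding the second summand by the Rayleigh--Ritz upper bound $\lambda_n(B(x))$ (again from Lemma \ref{lem:RR-CF}), the right-hand side is dominated by $\lambda_k(A(x)) + \lambda_n(B(x))$. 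Since this holds at every $x$, the upper inequality follows as an inequality of measurable functions on $\Omega$.

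For the lower bound I would either repeat the argument using the dual max-min representation in Lemma \ref{lem:RR-CF} with the Rayleigh--Ritz lower bound $v^{*}B(x)v \geq \lambda_1(B(x))\,v^{*}v$, or more economically apply the upper bound already proved to the pair $(A+B, -B)$ and use $\lambda_n(-B(x)) = -\lambda_1(B(x))$, rearranging to recover the desired inequality. The main obstacle, to the extent there is one, is essentially bookkeeping: verifying that the pointwise-in-$x$ applications of Courant--Fischer and Rayleigh--Ritz can be glued to give a statement about measurable functions. This is precisely what Lemma \ref{lem:RR-CF} has packaged, so the Weyl inequality transfers to the random setting without any genuinely new analytic content beyond the measurable ordering supplied by Theorem \ref{thm:order-eigen}.
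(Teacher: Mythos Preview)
Your proposal is correct and matches the paper's approach: the paper does not give a detailed proof but simply remarks that the standard Weyl argument via Courant--Fischer (Lemma \ref{lem:RR-CF}) extends pointwise once the measurable ordering of eigenvalues from Theorem \ref{thm:order-eigen} is in place. Your write-up just makes this explicit, and the alternative lower-bound derivation via $(A+B,-B)$ is a harmless variant of the same idea.
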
 $\square$

With these lemmas we can see immediately that  Cauchy Interlacing theorem for random matrices also holds. This will allow us to extends the proofs of Chan in \cite{ChR 89}.

\begin{thm}\label{thm:Cauchy}
Let $\hat{A}(x)$ be an $n \times n$
Hermitian random matrix with eigenvalue functions $\{\hat{\lambda}_j(x)\}$ arranged in increasing order. Let $A(x)$ be a
principal random submatrix of $\hat{A}(x)$ of order $n-1$ with eigenvalue functions $\{\lambda_j(x)\}$ arranged in increasing
order. Then
$$
\hat{\lambda}_1(x)\leq \lambda_1(x) \leq \hat{\lambda}_2(x)\leq\lambda_2(x)\leq \cdots\leq
\lambda_{n-2}(x)\leq \hat{\lambda}_{n-1}(x)\leq\lambda_{n-1}(x)\leq\hat{\lambda}_{n}(x)
$$
for all $x$ in $\Omega$. In general, if $A(x)$ is an $r\times r$  principal random
submatrix of $\hat{A}(x)$ obtained by deleting $n-r$ rows and the corresponding columns from
$\hat{A}(x)$. Then
$$
\lambda_k(\hat{A}(x))\leq \lambda_k(A(x))\leq \lambda_{k+n-r}(\hat{A}(x)), \quad \forall x\in X,
\ 1\leq k\leq r.
$$
\end{thm}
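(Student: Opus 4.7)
The plan is to reduce everything to the pointwise classical Cauchy interlacing theorem. By Theorem \ref{thm:order-eigen}, the eigenvalue functions $\lambda_j(\hat A(x))$ and $\lambda_j(A(x))$ are already measurable when arranged in increasing order, so the asserted inequalities make sense as inequalities between well-defined random variables. Consequently no new measurability argument is needed; what remains is to verify the interlacing at each fixed $x \in \Omega$ via the min--max characterization of Lemma \ref{lem:RR-CF}. To set up, at each $x$ let $J : \CC^r \hookrightarrow \CC^n$ be the isometric inclusion onto the coordinate subspace indexed by the rows and columns retained in forming $A(x)$ from $\hat A(x)$, so that $A(x) = J^*\hat A(x)J$ and
\[
v^*A(x)v = (Jv)^*\hat A(x)(Jv), \qquad \|Jv\| = \|v\|, \qquad v \in \CC^r.
\]

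For the lower inequality $\lambda_k(\hat A(x)) \leq \lambda_k(A(x))$, I would restate Lemma \ref{lem:RR-CF} in its subspace form,
\[
\lambda_k(\hat A(x)) = \min_{\dim T = k,\, T \subset \CC^n}\ \max_{V \in T\setminus\{0\}}\frac{V^*\hat A(x)V}{V^*V},
\]
with the analogous statement for $A(x)$ over $k$-dimensional subspaces $S' \subset \CC^r$. For any such $S'$, the image $JS' \subset \CC^n$ is again $k$-dimensional and Rayleigh quotients transfer verbatim under $J$, so $JS'$ is a valid candidate in the min--max for $\hat A(x)$. This gives $\lambda_k(\hat A(x)) \leq \max_{v \in S'\setminus\{0\}} (v^*A(x)v)/(v^*v)$, and minimising over $S'$ yields the desired inequality.

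For the upper inequality $\lambda_k(A(x)) \leq \lambda_{k+n-r}(\hat A(x))$, I would instead start from the $\hat A$-side: for any $(k+n-r)$-dimensional subspace $T \subset \CC^n$, the dimension count $\dim(T \cap J(\CC^r)) \geq (k+n-r)+r-n = k$ produces a $k$-dimensional $S' \subset \CC^r$ with $JS' \subset T$, and the identity of Rayleigh quotients gives
\[
\max_{V \in T\setminus\{0\}}\frac{V^*\hat A(x)V}{V^*V} \ \geq\ \max_{v \in S'\setminus\{0\}}\frac{v^*A(x)v}{v^*v} \ \geq\ \lambda_k(A(x));
\]
minimising over $T$ finishes the argument. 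The first displayed chain $\hat\lambda_1(x) \leq \lambda_1(x) \leq \hat\lambda_2(x) \leq \cdots$ is the special case $r = n-1$. The main ``obstacle'' is purely bookkeeping — keeping the dimensions $k$, $k+n-r$, and $r$ straight between the $\CC^r$ and $\CC^n$ sides and choosing the correct subspace form of the Courant--Fischer identity at each step; no randomness-specific work is required, since measurability has been absorbed into Theorem \ref{thm:order-eigen} and Lemma \ref{lem:RR-CF}.
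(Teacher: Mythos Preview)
Your proposal is correct and is exactly the approach the paper intends: the paper gives no proof beyond the remark preceding the theorem that ``with these lemmas we can see immediately that Cauchy Interlacing theorem for random matrices also holds,'' i.e.\ Theorem~\ref{thm:order-eigen} supplies measurability of the ordered eigenvalue functions and Lemma~\ref{lem:RR-CF} reduces the inequalities to the classical pointwise Courant--Fischer argument, which you have written out in full. You have simply supplied the details the paper leaves implicit.
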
 $\square$

\section{Almost equidistribution}

We extend the 
 important concept of equidistribution in analytic number theory to
 almost equidistribution.

Let
$(a^{(n)}_j)_{1\leq j \leq n+1}$,
$(b^{(n)}_j)_{1\leq j \leq n+1}$ be
two infinite lower triangular matrices
of real random variables on $\Omega$  such that for almost all $\omega$ in $\Omega$ the
entries in the lower triangular matrices are uniformly bounded, i.e.,
there is some constant $K(\omega)>0$ such that  $a^{(n)}_j(\omega)$, $b^{(n)}_j(\omega)$ are in $[-K(\omega), K(\omega)]$
for all admissible indices $n, j$.
We say that they are \emph{equally distributed} if
\begin{equation}\label{eq:heart}
\lim_{n \to \infty} \frac{1}{n+1}
\sum_{j=1}^{n+1} (f(a^{(n)}_j(\omega)) - f(b^{(n)}_j(\omega)))=0, \text{ a.e. on } \Omega,
\end{equation}
for any continuous function $f$ on $\mathbb{R}$.

\begin{prop}
The following are  equivalent conditions for a.e. uniformly bounded real infinite
random lower triangular matrices $(a^{(n)}_j)_{1\leq j \leq n+1}$ and
$(b^{(n)}_j)_{1\leq j \leq n+1}$ to be equally distributed.

(a) (\ref{eq:heart}) holds for $f(t)=t^k$ for all $k=0,1,2,\ldots$.

(b) (\ref{eq:heart}) holds for $f(t)=\log(1+x_nt)$ for a real sequence $x_n$ converging to zero.
\end{prop}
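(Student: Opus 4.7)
Both (a) and (b) should be shown equivalent to (\ref{eq:heart}) holding for every continuous $f$. The forward directions are immediate: $t\mapsto t^k$ is continuous, and once $|x_n|K(\omega)<1$ the function $t\mapsto \log(1+x_n t)$ is continuous on $[-K(\omega),K(\omega)]$. So the real content lies in the converses. Throughout I fix $\omega$ in the full-measure set on which every entry satisfies $|a_j^{(n)}(\omega)|,|b_j^{(n)}(\omega)|\le K$, with $K=K(\omega)$, and write $M_k^{(n)}:=\frac{1}{n+1}\sum_{j=1}^{n+1}\bigl((a_j^{(n)})^k-(b_j^{(n)})^k\bigr)$, which satisfies the a priori bound $|M_k^{(n)}|\le 2K^k$.

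\noindent\textbf{(a) implies (\ref{eq:heart}) for all continuous $f$.} Given continuous $f$ and $\varepsilon>0$, Stone--Weierstrass supplies a polynomial $p(t)=\sum_{k=0}^N c_k t^k$ with $\sup_{|t|\le K}|f(t)-p(t)|\le\varepsilon$. Then
$$
\left|\frac{1}{n+1}\sum_{j=1}^{n+1}\bigl(f(a_j^{(n)})-f(b_j^{(n)})\bigr)\right|
\;\le\; 2\varepsilon + \sum_{k=0}^N|c_k|\,|M_k^{(n)}|,
$$
and by (a) the right-hand sum vanishes in the limit. Letting $\varepsilon\to 0$ gives (\ref{eq:heart}) for $f$.

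\noindent\textbf{(b) implies (a).} Once $|x_n|K<1$, the Taylor series
$$
\log(1+x_n t)\;=\;\sum_{k=1}^{\infty}\frac{(-1)^{k+1}}{k}\,x_n^k\,t^k
$$
converges uniformly on $[-K,K]$, so term-by-term averaging converts (b) into
$$
\sum_{k=1}^{\infty}\frac{(-1)^{k+1}}{k}\,x_n^k\,M_k^{(n)}\;\longrightarrow\;0.
$$
The bound $|M_k^{(n)}|\le 2K^k$ dominates the tail past any fixed order by a geometric series of ratio $|x_n|K$. I would then extract the individual moment identities by induction on $k$: assume $M_\ell^{(n)}\to 0$ for $\ell<k$, subtract those (vanishing) contributions, divide the remainder by $x_n^k/k$, and bound the leftover tail $\sum_{\ell>k}(-1)^{\ell+1}x_n^{\ell-k}M_\ell^{(n)}/\ell$ by a geometric series of order $O(|x_n|)$. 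This forces $M_k^{(n)}\to 0$, which is (a), and combined with the previous paragraph yields (\ref{eq:heart}) for all continuous $f$.

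\noindent\textbf{Main obstacle.} The delicate point is the inductive moment extraction in the (b)$\Rightarrow$(a) step. A single parameter sequence $x_n\to 0$ yields just one limiting identity, yet we must unpack from it the whole sequence of moment identities $M_k^{(n)}\to 0$. The argument relies on two ingredients working together: the uniform bound $|M_k^{(n)}|\le 2K(\omega)^k$ inherited from the a.e.\ uniform boundedness of the entries, which guarantees the Taylor tails are geometrically summable; and the ability to peel off already-established lower-order contributions before each division by the next power of $x_n$. Making the remainder estimates genuinely $o(x_n^k)$ at each stage---rather than merely bounded---is the subtlety that requires care, and this is where the hypothesis on a.e.\ uniform boundedness of the triangular entries is indispensable.
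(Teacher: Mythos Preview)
Your argument for (a) is correct and matches the paper's. The gap is in (b), and it is exactly the point you flag but do not close in your ``Main obstacle''. You write the hypothesis as a single limiting identity $\sum_{k\ge 1}\frac{(-1)^{k+1}}{k}x_n^k M_k^{(n)}\to 0$ with the \emph{same} index $n$ on $x_n$ and on $M_k^{(n)}$. Under that reading (b) is vacuous and your induction collapses at the first step: take $a_j^{(n)}\equiv K$, $b_j^{(n)}\equiv 0$, so that $M_k^{(n)}=K^k$ for all $n$; the displayed sum is then $\log(1+x_nK)\to 0$ merely because $x_n\to 0$, while no $M_k^{(n)}$ tends to $0$. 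Concretely, after you ``divide the remainder by $x_n^k/k$'' you are left with $k\,g_n/x_n^k$ on one side, and from $g_n\to 0$ alone there is no reason this quantity is $o(1)$; in the example it converges to $kK^k\neq 0$.

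The paper reads (b) with a \emph{separate} parameter index: a sequence $(x_m)_m\to 0$ such that for \emph{each} fixed $m$ the limit \eqref{eq:heart} holds with $f(t)=\log(1+x_mt)$. It then bypasses your induction entirely via complex analysis. For fixed $\omega$ set $g_n(z)=\frac{1}{n+1}\sum_j\bigl(\log(1+za_j^{(n)})-\log(1+zb_j^{(n)})\bigr)$; these are analytic and uniformly bounded on a closed disk $|z|\le R<1/K(\omega)$, and by hypothesis $g_n(x_m)\to 0$ on a set with cluster point $0$. Vitali's theorem forces $g_n\to 0$ uniformly on $|z|\le R$, and Cauchy's integral formula for the Taylor coefficients at $0$ then yields $M_k^{(n)}\to 0$ for every $k$, giving (a). Your elementary route can be salvaged under this separate-index reading, but only via a genuine two-step limit: for each fixed $m$ one has $|M_k^{(n)}|\le k|x_m|^{-k}|g_n(x_m)|+k|x_m|^{-k}\sum_{\ell<k}\ell^{-1}|x_m|^{\ell}|M_\ell^{(n)}|+C_k|x_m|$, so by induction on $k$ one gets $\limsup_n|M_k^{(n)}|\le C_k|x_m|$, and then letting $m\to\infty$ finishes. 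As written, though, your single-index formulation does not permit this, and the step ``this forces $M_k^{(n)}\to 0$'' is unjustified.
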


\begin{proof}
(a) Forgetting a set of measure zero,
we can assume both the random matrices are uniformly bounded everywhere on $\Omega$, and
(\ref{eq:heart}) holds for all $f(t)=t^k$ everywhere on $\Omega$ for all $k=0,1,2,\ldots$.
By the Stone-Weierstrass theorem, for every real continuous function
$f$ in $C(\mathbb{R})$ we have a sequence $\{p_m(f)\}_m$ of polynomial such that
$$
\sup \{|f(x) - p_m(f)(x)|: x\in [-m,m]\} < 1/m, \quad m =1,2,\ldots.
$$
For every fixed $\omega$ in $\Omega$, we have
\begin{align*}
&\ \lim_{n\to\infty} \frac{1}{n+1}\left|\sum_{j=1}^{n+1} (f(a^{(n)}_j(w)) - f(b^{(n)}_j(w)))\right| \\
\leq &\ \lim_{n\to\infty} \frac{1}{n+1}\left|\sum_{j=1}^{n+1} (p_m(f)(a^{(n)}_j(w)) - p_m(f)(b^{(n)}_j(w)))\right| + 2/m\\
= &\ 2/m, \quad\forall m\geq K(\omega).
\end{align*}
This gives rise to
$$
\lim_{n\to\infty} \frac{1}{n+1}\sum_{j=1}^{n+1} (f(a^{(n)}_j(\omega)) - f(b^{(n)}_j(\omega))) =0, \quad\forall \omega\in \Omega.
$$

(b) For (b), we recall that in \cite{GS 58} the assumption should be that
(\ref{eq:heart}) holds for $f_x(t)=\log(1+xt)$ for all real $x$ and $|x| < K^{-1}$.
Indeed, to utilize the proof there, i.e. to use Vitali's theorem for holomorphic
functions  (\cite{Tit 60} p.168)
 we simply need
(\ref{eq:heart}) to hold for a sequence of real $x_n$ with $|x_n| < K^{-1}$ and $\{x_n\}$ has a cluster point $x$ with
$|x| < K^{-1}$.
In this case, the proof in page 63 of \cite{GS 58}
works.

For the sake of completeness, we include the  proof here.
Assume that for each occasion $z=x_m$, where  $x_m$ is  a sequence
of real numbers with $|x_m|\leq R< K^{-1}$
and having a cluster point, we have
$$
\lim_{n\to\infty} \frac{\sum_{j=1}^{n+1} (\log(1+za^{(n)}_j(\omega)) -
\log(1+zb^{(n)}_j(\omega)))}{n+1} =0
$$
holds
all $\omega$ in  $\Omega$ except a measurable subset of measure zero.
Fix $\omega$, the quotient following the limit sign in the above displayed
formula is a single-valued and analytic function of the complex
variable $z$ provided that $|z|< 1/K$.  It is uniformly bounded
in $z$ and $n$ provided that $|z| \leq R_1$ with $R< R_1 < 1/K$.
Since the sequence converges
on a set $\{x_1, x_2, \ldots\}$ with cluster point in the
complex open disk
$B(0;R_1)$
centered at zero with radius $R_1$, Vitali's theorem ensures that
the sequence indeed converges uniformly on $\overline{B(0;R)}$.
Applying the Cauchy integral formula, we see that
$$
\lim_{n\to\infty} \frac{\sum_{j=1}^{n+1} (a^{(n)}_j(\omega)^m -
b^{(n)}_j(\omega)^m)}{n+1}, \quad\forall m=0,1,2,\ldots.
$$
Then we can apply (a). \end{proof}

A useful 
special case of the previous proposition is when $K$ is constant. 
The obvious statements are left to the readers.

\bigskip

\begin{center} Part II.
\end{center}

\bigskip

Given a sequence $\{ c_n: -\infty < n < \infty \}$ of complex numbers. We can form an $(n+1) \times (n+1)$ matrix
$T_n$ whose
$(i,j)$-th entry is $c_{i-j}$.

On the other hand a random Toeplitz matrix is like the $T_n$ given above except now $c_n$ is a sequence of independent identically distributed (iid) random variables with Gaussian distributions. See
\cite{BDJ 06}.

Let us at the moment ignore the iid condition.
Suppose we are given a sequence $\{ c_n: -\infty < n < \infty \}$ of real valued random variables on a probability space
$(\Omega, \mu)$. Suppose there is a function $f(\omega, x)$ on
$\Omega  \times S^1$ such that it has a  \emph{Fourier series expansion} at almost everywhere on $\Omega$ that
$$
f(\omega, z) = \sum_{n=-\infty}^{\infty} c_n(\omega) e^{inz}, \quad \forall z\in S^1.
$$
We say that the above Fourier series is \emph{uniformly summable} on $\Omega$ if for every $\epsilon >0$ there
is a positive integer $N$ such that outside a subset of $\Omega$ of zero measure we have
$$
 \sum_{|n| > N} |c_n(\omega)| < \epsilon.
$$

We shall establish   the results  of Szeg\'o
necessary for the analysis of
 Chan in  \cite{ChS 89} and  \cite{ChR 89}.

\section{Random Laurent operators}

Let $(\Omega, \mu)$ be a measure space, $H$ be a Hilbert space and
$\sL(H)$ be the $C^*$ algebra of bounded linear operators on $H$.
By a random linear operator we mean a map
$\Phi: \Omega \to \sL(H)$ such that for every pair $x,y \in H$ the map
$$\Omega \to \CC : \omega \mapsto \la \Phi (\omega) x, y\ra$$
is measurable. See \cite{Sko 84}.

We begin with a construction of random Laurent  operators
following Brown and Halmos \cite{{BH 63}}.

Let $(\Omega, \mu)$ be a probability space. Let $S^1$ be the unit circle in the complex plane. $\fL^2(S^1)$ denotes the Hilbert space of square integrable
complex valued
functions on $S^1$ with respect to the Lebesgue measure on the Borel sets.

The space of bounded linear
operators on a Hilbert space $H$ is denoted by $\sL (H)$.
But
we shall write $\sL(S^1)$  for  $\sL(\fL^2(S^1))$.

Let $\varphi: \Omega  \times S^1 \to \CC$ be measurable
with respect to the product measure. Then the function
$\varphi(\omega, \bullet)$ defines a measurable function
$\varphi_\omega: S^1 \to \CC$ which we shall assume to be bounded.
 By the Laurent operator defined by
$\varphi$ we mean the map $L=L^\varphi : \Omega \to \sL(S^1)$ defined by multiplication
$$L_\omega f = \varphi_\omega f, \;\; f \in \fL^2(S^1), \omega \in \Omega.$$

\begin{prop}
Given $f, g \in \fL^2(S^1)$ the map $\Omega \to \CC$ taking $\omega$
to $\la  L_\omega f, g\ra$ is measurable.
\end{prop}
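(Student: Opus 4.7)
The plan is to recognize $\langle L_\omega f, g\rangle$ as an iterated integral and invoke the measurability half of Fubini's theorem. Explicitly,
$$
\langle L_\omega f, g\rangle = \int_{S^1} \varphi(\omega, z) f(z)\overline{g(z)}\,dz,
$$
so it suffices to verify that the integrand is jointly measurable on $\Omega\times S^1$ and integrable in $z$ for each fixed $\omega$, then the classical Fubini--Tonelli theorem delivers measurability of the integral as a function of $\omega$.

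First I would verify joint measurability of the integrand $h(\omega, z) := \varphi(\omega, z)f(z)\overline{g(z)}$. The hypothesis gives that $\varphi$ is measurable on the product space. The factors $f$ and $\overline{g}$ are measurable functions on $S^1$, and pulling them back along the projection $\Omega\times S^1 \to S^1$ yields jointly measurable functions on $\Omega\times S^1$ (the preimage of a Borel set under the projection is measurable). Since the product of jointly measurable complex-valued functions is jointly measurable, $h$ is measurable on $\Omega\times S^1$.

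Next I would check the integrability bound. For each fixed $\omega\in\Omega$, the hypothesis that $\varphi_\omega$ is bounded gives some $M(\omega) < \infty$ with $|\varphi(\omega, z)| \leq M(\omega)$ almost everywhere on $S^1$. Hence
$$
\int_{S^1} |h(\omega, z)|\,dz \leq M(\omega)\int_{S^1} |f(z)||g(z)|\,dz \leq M(\omega)\,\|f\|_2\,\|g\|_2 < \infty,
$$
by the Cauchy--Schwarz inequality, so the slice $h(\omega, \cdot)$ lies in $\fL^1(S^1)$ for every $\omega$.

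With joint measurability of $h$ and slicewise $\fL^1$-integrability established, the Fubini--Tonelli theorem asserts that the function $\omega \mapsto \int_{S^1} h(\omega, z)\,dz$ is measurable on $\Omega$, which is exactly the claim. The step I expect to require the most care is the joint-measurability claim: one must distinguish it from mere separate measurability in each variable, which would not be enough for Fubini. Here the hypothesis that $\varphi$ is measurable with respect to the product measure is precisely what makes the argument go through, and the pull-back of $f\overline{g}$ along the projection is the standard device for turning a single-variable measurable function into a jointly measurable one.
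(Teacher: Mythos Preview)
Your proof is correct. The paper takes a different, more hands-on route: it approximates $\varphi$ by finite linear combinations of indicator functions of measurable rectangles $1_{A\times B}(\omega,z)=1_A(\omega)1_B(z)$, observes that for each such building block the map $\omega\mapsto\langle L_\omega f,g\rangle$ is visibly measurable (it equals $1_A(\omega)\int_{S^1}1_B f\overline{g}$), and then passes to pointwise limits. In effect the paper is redoing by hand the approximation argument that underlies the measurability portion of Fubini--Tonelli, whereas you invoke that theorem directly. Your approach is tidier and makes the hypotheses do the work transparently; the paper's approach is more elementary in that it avoids naming Fubini, but it is also sketchier, since the claim that a product-measurable function is a pointwise limit of linear combinations of rectangle indicators itself requires a monotone-class or similar argument that the paper leaves implicit.
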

\begin{proof}
Note that every measurable function can be written as a pointwise limit of finite linear sums
of simple functions.  We can use simple functions of the form as the indicator functions
$1_{A\times B}(\omega, z) = 1_A(\omega)1_B(z)$ of measurable squares $A\times B$.
Such functions give rise the measurability of the map,
and so do the finite linear sums of them.  Taking pointwise limit of a
sequence we verify the asserted
measurability.
\end{proof}

\begin{lem}
Suppose $A: \Omega \to \sL(S^1)$ is a random linear operator. Then the functions $\varphi : \Omega \times S^1 \to \CC$ defined by
$\varphi(\omega, z) := (A(\omega) e_0)(z)$
and $\varphi_\omega :  S^1 \to \CC$ defined by
$\varphi_\omega (z) = \varphi(\omega, z) $, for $\omega \in \Omega$
are measurable.
\end{lem}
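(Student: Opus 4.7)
The plan is to produce a jointly measurable representative of the $\fL^2(S^1)$-valued map $\omega\mapsto A(\omega)e_0$ by exhibiting it as the pointwise a.e.\ limit of trigonometric polynomials whose coefficients are scalar measurable functions supplied by the definition of a random linear operator.

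First, I would apply the defining property of $A$ to the pair $f=e_0$ and $g=e_n$, where $\{e_n\}_{n\in\ZZ}$ (with $e_n(z)$ proportional to $e^{inz}$) is the standard orthonormal basis of $\fL^2(S^1)$. This produces the Fourier coefficients
$$
c_n(\omega) := \la A(\omega) e_0,\ e_n\ra, \quad n\in\ZZ,
$$
each measurable in $\omega$. For each fixed $\omega$, the Riesz--Fischer theorem gives $\sum_n |c_n(\omega)|^2 = \|A(\omega) e_0\|^2 < \infty$ and $A(\omega) e_0 = \sum_n c_n(\omega) e_n$ in $\fL^2(S^1)$.

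Next, form the Fej\'er (Ces\`aro) partial sums
$$
\sigma_N(\omega, z) := \sum_{|n|\leq N}\left(1 - \frac{|n|}{N+1}\right) c_n(\omega)\, e_n(z),
$$
each of which is jointly measurable on $\Omega\times S^1$, being a finite sum of products of functions measurable in $\omega$ and continuous in $z$. For every fixed $\omega$, the Fej\'er--Lebesgue theorem applied to $A(\omega) e_0 \in \fL^2(S^1)\subset \fL^1(S^1)$ guarantees that $\sigma_N(\omega, z)\to (A(\omega)e_0)(z)$ at almost every $z\in S^1$. Consequently,
$$
\varphi(\omega, z) := \limsup_{N\to\infty} \re \sigma_N(\omega, z) + i\limsup_{N\to\infty} \Im \sigma_N(\omega, z)
$$
defines a jointly measurable function on $\Omega\times S^1$ that coincides with $(A(\omega)e_0)(z)$ for a.e.\ $z$ at every $\omega$. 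Adopting this as the canonical pointwise representative of $A(\omega)e_0$ makes both $\varphi$ and every section $\varphi_\omega = \varphi(\omega,\cdot)$ measurable.

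The main obstacle is the standard subtlety that $A(\omega)e_0$ is only defined as an element of $\fL^2(S^1)$, i.e.\ up to a Lebesgue null set in $z$, so there is no a priori notion of pointwise evaluation. The Ces\`aro device is the natural remedy: it converts the weak-measurability data (the scalars $c_n(\omega)$) into an explicit sequence of jointly measurable trigonometric polynomials whose pointwise limits exist for \emph{every} $\omega$ along the single sequence $N\to\infty$ (not merely along an $\omega$-dependent subsequence, which is what a direct Pettis/Bochner approach would yield), so the joint Borel structure transfers automatically.
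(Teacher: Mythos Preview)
Your argument is correct and follows the same overall strategy as the paper: both proofs exhibit $\varphi$ as a pointwise limit of finite trigonometric sums $\sum_{j} \la A(\omega)e_0, e_j\ra\, e_j(z)$, each jointly measurable as a finite sum of products of a measurable function of $\omega$ with a continuous function of $z$. The paper simply lets $n\to\infty$ in the ordinary partial sums without further comment, which tacitly relies on pointwise a.e.\ convergence of the $\fL^2$ Fourier series (i.e.\ Carleson's theorem, or at least its qualitative $\fL^2$ statement). Your Ces\`aro averaging replaces this appeal with the elementary Fej\'er--Lebesgue theorem and, together with the $\limsup$ device, explicitly produces a canonical measurable representative and so handles the very subtlety you flag; in that sense your version is the more careful of the two, at the cost of a slightly longer argument.
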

\begin{proof}
Let $\{e_1, \ldots\}$ be an orthonormal basis of the separable Hilbert space $\fL^2(S^1)$.
The measurability of $A$ implies that the maps
$$
(\omega,z) \mapsto \sum_{j=1}^n \la A(\omega)e_0, e_j\ra e_j(z)
$$
are measurable on $\Omega\times S^1$ for $n=1,2,\ldots$.
Letting $n\to\infty$, we see that the map
\begin{align*}
\varphi(\omega, z) &= (A(\omega) e_0)(z)=\sum_{j=1}^\infty \la A(\omega)e_0, e_j\ra e_j(z)
\end{align*}
is measurable on  $\Omega\times S^1$.  By the definition of product measures, we see
that $\varphi_\omega$ is measurable on $S^1$ for all $\omega$ in $\Omega$.
\end{proof}

\begin{lem}
Let $\phi$ be a  measurable function in $\fL^2(S^1)$. Define a linear map $B: \fL^2(S^1) \to \fL^2(S^1)$ by
$f \mapsto \phi f$. Then the domain of $B$ is dense and  $B$ is a closed operator.
\end{lem}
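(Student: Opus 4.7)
The plan is first to make explicit what the domain of $B$ must be: since $\phi$ lies in $\fL^2(S^1)$ but need not be bounded, the product $\phi f$ need not lie in $\fL^2(S^1)$ for arbitrary $f\in \fL^2(S^1)$. I interpret the lemma with the natural maximal domain
$$
\Dom(B) = \{\, f\in \fL^2(S^1) : \phi f\in \fL^2(S^1)\,\},
$$
and verify the two assertions separately. Note that $\phi$ is finite almost everywhere since it is square integrable.

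For density, I would introduce the measurable sub-level sets
$$
E_n = \{\, z\in S^1 : |\phi(z)|\leq n\,\}, \qquad n=1,2,\ldots,
$$
which increase to $S^1$ up to a null set (where $\phi$ would be infinite). Given any $f\in \fL^2(S^1)$, set $f_n := f\cdot 1_{E_n}$. Then $|\phi f_n|\leq n|f|$, so $\phi f_n\in \fL^2(S^1)$ and hence $f_n\in \Dom(B)$. The dominated convergence theorem (with dominating function $|f|^2$) yields $f_n\to f$ in the $\fL^2$-norm, proving that $\Dom(B)$ is dense.

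For closedness, I would take a sequence $(f_n)\subset \Dom(B)$ together with $f\in \fL^2(S^1)$ and $g\in \fL^2(S^1)$ such that $f_n\to f$ and $Bf_n=\phi f_n\to g$ in the $\fL^2$-norm. By the Riesz--Fischer theorem, one can extract a single subsequence $(f_{n_k})$ along which both convergences are pointwise almost everywhere on $S^1$. Since $\phi$ is finite a.e., pointwise convergence $f_{n_k}\to f$ a.e.\ gives $\phi f_{n_k}\to \phi f$ a.e., and the same subsequence satisfies $\phi f_{n_k}\to g$ a.e. Uniqueness of pointwise limits forces $\phi f = g$ a.e., so $\phi f\in \fL^2(S^1)$, which means $f\in \Dom(B)$ and $Bf=g$. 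This is exactly the definition of closedness.

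No step here is genuinely hard; the only place requiring care is to keep separate the two different notions of convergence (norm and pointwise a.e.) and to use the standard trick of passing to a common subsequence, which is the only way to transfer the norm limit into a pointwise identity against which the multiplication by $\phi$ can be tested.
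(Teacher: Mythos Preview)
Your proof is correct. The closedness argument is essentially identical to the paper's: pass from $\fL^2$-convergence to a subsequence converging almost everywhere (the paper phrases the intermediate step as convergence in measure, but the endpoint is the same), and read off $\phi f = g$ pointwise a.e.

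The density argument, however, is genuinely different. The paper simply observes that every $g\in C(S^1)$ is bounded, so $\phi g\in \fL^2(S^1)$ whenever $\phi\in \fL^2(S^1)$; hence $C(S^1)\subset \Dom(B)$, and $C(S^1)$ is already known to be dense in $\fL^2(S^1)$. Your route via the sub-level sets $E_n=\{|\phi|\le n\}$ and the truncations $f_n=f\cdot 1_{E_n}$ is slightly longer but more portable: it uses only that $\phi$ is measurable and finite a.e., needs no topology on the underlying space, and makes no appeal to the density of continuous functions. The paper's argument is the quicker one in this concrete setting; yours would survive unchanged on an arbitrary $\sigma$-finite measure space.
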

\begin{proof}
Since $\phi g\in \fL^2(S^1)$ for all $g$ in $C(S^1)$, the domain $D(B)$ of $B$ contains the dense
subspace $C(S^1)$ of $\fL^2(S^1)$.  Suppose that $f_n\to f$ and $\phi f_n\to g$ in $\fL^2$-norm.
Then both $f_n\to f$ and $\phi f_n \to g$ in measure.  Therefore, we have a subsequence $\{f_{n_k}\}$ of $\{f_n\}$
such that both $f_{k_k}\to f$ and $\phi f_{n_k}\to g$ almost everywhere on $S^1$.  Consequently, $\phi f =g$ and $B$ has
a closed graph.
\end{proof}

For an integer $n$ and  $z \in S^1$, we write $e_n (z)=z^n$.
We introduce the shift operator $W$ on $\fL^2(S^1)$ as multiplication by $e_1$, i.e.
$W f (z) = (e_1 f)(z)= z f(z)$.
Clearly $W e_n = e_{n+1}$ for all $n$ and $W^n f = e_n f$ for
all $ n  \geq 0$. We are interested in the centralizer $Z(W)$ of
the shift operator $W$
in the space $\sR$ of all random linear operators from
$\Omega$ to $\sL(S^1)$-
$$Z(W)= \{A \in \sR : A(\omega) W = W A(\omega), \;\omega \in \Omega\}$$.

\begin{thm}
We have $Z(W) =\{L^\varphi\}$ where $\varphi : \Omega \times S^1 \to \CC$ with $\varphi_\omega$ bounded for $\omega \in \Omega$.
\end{thm}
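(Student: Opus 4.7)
The plan is to establish both inclusions separately. The inclusion $\{L^\varphi\}\subseteq Z(W)$ is immediate: if $\varphi:\Omega\times S^1\to\CC$ is measurable and each $\varphi_\omega$ is bounded, then $L^\varphi_\omega$ is multiplication by $\varphi_\omega$, and multiplication by $\varphi_\omega$ commutes with multiplication by $e_1$ pointwise on $S^1$; the random-operator measurability was already checked in the proposition above. The substance is in the reverse inclusion.

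For $\supseteq$, let $A\in Z(W)$. Set $\varphi(\omega,z):=(A(\omega)e_0)(z)$ and $\varphi_\omega(z):=\varphi(\omega,z)$, so that by the second lemma of this section $\varphi$ is measurable on $\Omega\times S^1$ and each $\varphi_\omega$ is measurable on $S^1$. I first show that $A(\omega)e_n=\varphi_\omega e_n$ for every integer $n$ and every $\omega$. For $n\ge 0$ this is direct:
$$
A(\omega)e_n = A(\omega)W^n e_0 = W^n A(\omega)e_0 = e_n\varphi_\omega = \varphi_\omega e_n.
$$
Since $W$ is unitary on $\fL^2(S^1)$, from $A(\omega)W=WA(\omega)$ I obtain $W^{-1}A(\omega)=A(\omega)W^{-1}$ by multiplying on both sides by $W^{-1}$, so the same computation with $W^{-n}$ handles $n<0$. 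By linearity, $A(\omega)p=\varphi_\omega p$ for every trigonometric polynomial $p$.

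Next I pass from trigonometric polynomials to all of $\fL^2(S^1)$. Fix $\omega\in\Omega$ and let $f\in \fL^2(S^1)$. Choose trigonometric polynomials $p_k\to f$ in $\fL^2$-norm. Then $A(\omega)p_k\to A(\omega)f$ by boundedness of $A(\omega)$, while on the other hand $A(\omega)p_k=\varphi_\omega p_k$. Applying the preceding lemma on the closedness of multiplication by $\varphi_\omega$ (treated as a densely defined operator on $\fL^2(S^1)$), the pair $(f,A(\omega)f)$ lies in its graph, so $f$ is in its domain and $A(\omega)f=\varphi_\omega f$. Since this holds for every $f\in \fL^2(S^1)$, the multiplication operator by $\varphi_\omega$ is everywhere defined on $\fL^2(S^1)$, which forces $\varphi_\omega\in\fL^\infty(S^1)$ with $\|\varphi_\omega\|_\infty\le \|A(\omega)\|$. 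Hence $A(\omega)=L^{\varphi_\omega}$ for every $\omega$, i.e.\ $A=L^\varphi$.

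The step I expect to carry the most weight is the passage from trigonometric polynomials to all of $\fL^2(S^1)$: the density of trigonometric polynomials gives agreement on a dense set, but the essential$L^\infty$-boundedness of $\varphi_\omega$ is extracted only by combining the boundedness of $A(\omega)$ with the closed-graph property of multiplication operators established earlier. Everything else is routine, and the adaptation to the random setting adds no real difficulty because the measurability of $\varphi$ was already handled in the auxiliary lemma.
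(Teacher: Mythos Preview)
Your proof is correct and in fact more self-contained than the paper's. Two points of comparison are worth noting.

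First, to extend $A(\omega)e_n=\varphi_\omega e_n$ to negative $n$, the paper invokes Fuglede's theorem (commutation with a normal operator implies commutation with its adjoint). You instead observe that $W$ is unitary, so $W^{-1}$ exists and $A(\omega)W=WA(\omega)$ immediately yields $A(\omega)W^{-1}=W^{-1}A(\omega)$ by pure algebra. Since $W^*=W^{-1}$ for a unitary, this is exactly what Fuglede gives, but your route is elementary and avoids an external citation; Fuglede's full strength is not needed here.

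Second, once agreement on $\{e_n:n\in\ZZ\}$ is established, the paper simply asserts ``Thus $A(\omega)=B(\omega)$ on $\fL^2(S^1)$'' and defers boundedness of $\varphi_\omega$ to ``a norm argument.'' You spell this out: density of trigonometric polynomials plus boundedness of $A(\omega)$ plus the closed-graph lemma for multiplication operators force $\varphi_\omega f=A(\omega)f$ for every $f$, whence $\varphi_\omega\in\fL^\infty$. This is the honest way to close the argument, and it explains why the closedness lemma was stated just before the theorem.
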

\begin{proof}
Since multiplication operators commutes it follows that
$\{L^\varphi\} \subset  Z(W)$.

 Conversely take $A \in Z(W)$. Define   $\varphi : \Omega \times S^1 \to \CC$ defined by
$\varphi(\omega, z) := (A(\omega) e_0)(z)$
Then for $n \geq 0$, we have
$$A(\omega)e_n = A(\omega)W^n e_0 = W^n A (\omega)e_0 = W^n \varphi_\omega = e_n \cdot
\varphi_\omega = \varphi_\omega \cdot e_n$$

 Let $B(\omega)$ be the operator defined by  multiplication by
 $\varphi_\omega$. Then the above equations that $A(\omega)e_n =
 B(\omega) e_n$ for  $n \geq 0$. Fuglede's theorem
 (\cite{Fug 50}) says that
 $A(\omega)$ commutes with $W$ implies that $A(\omega)$
 commutes with $W^*$. The same argument that gives $A(\omega)e_n =
 B(\omega) e_n$ for all $n$. Thus $A(\omega) =
 B(\omega) $ on $\fL^2(S^1)$.

 That $\varphi_\omega$ is bounded follow by a norm argument.
 \end{proof}

\begin{prop}
Let $\Omega$ be a measure space.  Let $S^1$ be the unit circle in the complex plane $\mathbb C$.
Let $f : \Omega\times S^1 \to \mathbb R$  (or  $\mathbb C$) such that
\begin{itemize}
    \item for each fixed $\omega$ in $\Omega$, the map $z\mapsto
     f(\omega, z)$ is continuous, and
    \item for each fixed $z$ in $S^1$, the map 
    $\omega\mapsto f(\omega,z)$ is measurable.
\end{itemize}
Then there are measurable functions $g_n(\omega)$ on $\Omega$ such that
$$
f(\omega,z) = \sum_{n\in\mathbb Z} g_n(\omega)z^n, \quad\forall \omega\in \Omega, \forall z\in \mathbb S^1.
$$
\end{prop}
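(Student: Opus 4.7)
The plan is to define the candidate coefficients as the usual Fourier coefficients of $f(\omega,\cdot)$, verify their measurability via Fubini once joint measurability of $f$ has been established, and then identify the series with $f$ in the standard Fourier-theoretic sense. Concretely, I would set
$$
g_n(\omega) := \frac{1}{2\pi}\int_0^{2\pi} f(\omega, e^{i\theta})\, e^{-in\theta}\, d\theta, \qquad n\in\mathbb{Z},
$$
and show (i) that $g_n$ is measurable on $\Omega$, and (ii) that $\sum_n g_n(\omega) z^n$ recovers $f(\omega,z)$.

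First I would establish joint measurability of $f$ on $\Omega\times S^1$, since this is the real technical ingredient and is needed to invoke Fubini. The standard argument for a Carath\'eodory function does the job: for each $N\geq 1$ partition $S^1$ into arcs $I_{N,k} = \{e^{i\theta} : \theta \in [2\pi(k-1)/N, 2\pi k/N)\}$, pick sample points $z_{N,k}\in I_{N,k}$, and define
$$
f_N(\omega, z) := \sum_{k=1}^{N} f(\omega, z_{N,k})\, \mathbf{1}_{I_{N,k}}(z).
$$
Each $f_N$ is a finite sum of products of a function measurable in $\omega$ with the indicator of a Borel set in $S^1$, hence product-measurable. By the assumed continuity of $z\mapsto f(\omega,z)$ on the compact $S^1$, we have $f_N(\omega, z)\to f(\omega, z)$ pointwise, so $f$ is product-measurable as a pointwise limit of product-measurable functions.

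With joint measurability in hand, the integrand $(\omega,\theta)\mapsto f(\omega,e^{i\theta})e^{-in\theta}$ is product measurable. For each fixed $\omega$ the map $\theta\mapsto f(\omega,e^{i\theta})$ is bounded (continuous on compact $S^1$), so the integral defining $g_n(\omega)$ is absolutely convergent, and Fubini--Tonelli yields measurability of $g_n$ on $\Omega$. This is the routine half.

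The main obstacle is the convergence assertion $f(\omega,z) = \sum_{n\in\mathbb{Z}} g_n(\omega) z^n$ pointwise in $(\omega,z)$, because continuity of $f(\omega,\cdot)$ alone does not guarantee pointwise convergence of its Fourier series (du~Bois-Reymond). The honest way I would present this is to point out that for each $\omega$ the identity holds in the $\fL^2(S^1)$-sense (the Fourier series of a continuous function converges in $L^2$ to it by Parseval), and uniformly on $S^1$ in the Ces\`aro (Fej\'er) sense; pointwise equality in the sense written in the statement follows once one has additional regularity of $f(\omega,\cdot)$ (for instance H\"older continuity, or absolute summability of the coefficients as in the notion of \emph{uniformly summable} series introduced just above). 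In the setting in which the proposition will actually be applied later in the paper, the Fourier series is \emph{uniformly summable}, and then the series converges absolutely and uniformly in $z$ for a.e.\ $\omega$, giving the stated pointwise equality.
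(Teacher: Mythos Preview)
Your measurability argument is correct but proceeds by a different route from the paper. You first establish joint measurability of $f$ via the standard Carath\'eodory step-function approximation and then invoke Fubini to conclude that $g_n$ is measurable. The paper instead bypasses joint measurability and Fubini entirely: it observes that the functional $h\mapsto \frac{1}{2\pi}\int_{S^1} h(z)z^{-n}\,dz$ is a norm-one element of the dual of the separable space $C(S^1)$, hence lies in the weak*-metrizable dual ball and is therefore a \emph{sequential} weak* limit of convex combinations of point masses $\delta_t$; since each $\omega\mapsto \delta_t(f(\omega,\cdot))=f(\omega,t)$ is measurable by hypothesis, $g_n$ is a pointwise limit of measurable functions. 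Your approach yields the stronger intermediate conclusion of product measurability (which is independently useful), while the paper's functional-analytic shortcut is a bit slicker and needs no product $\sigma$-algebra at all.

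On the convergence of the series you are more scrupulous than the paper itself, which disposes of the matter with ``It is plain that $f$ carries the stated form (by Fourier transform).'' Your remark that mere continuity of $f(\omega,\cdot)$ does not force pointwise convergence of its Fourier series, and that one should read the equality in the $\fL^2$ or Fej\'er sense (or invoke the uniform-summability hypothesis used later), is a legitimate caveat rather than a defect in your argument.
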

\begin{proof}
Since $C(S^1)$ is a separable Banach space, its  weak* compact convex dual ball is
metrizable.  In particular, the norm one linear functional
$$
h(z) \mapsto \frac{1}{{2\pi}}\int_{S^1} h(z) z^{-n} dz, \quad h \in C(S^1),
$$
is a limit of a sequence of convex combinations of point masses.

For a point mass $\delta_t$ with $t$ in $S^1$, the function
$$
\omega \mapsto \delta_t(f(\omega,z)) = f(\omega,t)
$$
is measurable on $\Omega$.  As a pointwise limit of a sequence of measurable functions, the function
$g_n(x)$ defined by
$$
\omega \mapsto  \frac{1}{{2\pi}}\int_{S^1} f(\omega,z) z^{-n} dz
$$
is measurable on $\Omega$ for each $n$ in $\mathbb Z$.

It is plain that $f$ carries the stated form (by Fourier transform).
\end{proof}

The matrix coefficients of  a random linear operator $A: \Omega \to \sL(S^1)$ with respect to $\{e_j\}$ are defined to be
$$a_{ij} (\omega) = \la A(\omega) e_j, e_i\ra$$
for $i. j \in \ZZ$.

\begin{lem} Given a measurable function
$\varphi : \Omega \times S^1 \to \CC$ with $\varphi_\omega$ bounded.
Let $\varphi_\omega = \sum_i c_i (\omega)e_i$ be the Fourier expansion of  $\varphi$.
Let $L=L^\varphi$ be the random linear operator defined by multiplication by $\varphi$.  Then the matrix coefficients of $L$ are given by
$$\ell_{ij}(\omega) = c_{i-j}(\omega)$$
\end{lem}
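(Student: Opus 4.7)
The plan is to compute the matrix coefficient $\ell_{ij}(\omega) = \langle L_\omega e_j, e_i\rangle$ directly from the definition of $L^\varphi$ as multiplication by $\varphi_\omega$, and then use the orthonormality of the exponential basis $\{e_n\}_{n\in\ZZ}$ of $\fL^2(S^1)$ to pick out a single Fourier coefficient.

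First I would fix $\omega\in\Omega$ and write, by hypothesis, the Fourier expansion
$\varphi_\omega = \sum_{k\in\ZZ} c_k(\omega) e_k$, convergent in $\fL^2(S^1)$ (this is justified because $\varphi_\omega$ is bounded, hence in $\fL^2(S^1)$). Then I would observe the elementary identity $e_k\cdot e_j = e_{k+j}$ for all integers $k,j$, which follows pointwise from $z^k z^j = z^{k+j}$ on $S^1$. Multiplying the Fourier series by $e_j$ (this is a bounded operator on $\fL^2(S^1)$, so it preserves $\fL^2$-convergence), I get
$$
L_\omega e_j = \varphi_\omega e_j = \sum_{k\in\ZZ} c_k(\omega)\, e_{k+j}.
$$

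Next I would pair both sides with $e_i$ using the inner product on $\fL^2(S^1)$. Since $\{e_n\}_{n\in\ZZ}$ is an orthonormal basis, $\langle e_{k+j}, e_i\rangle = \delta_{k+j,\,i}$, so exchanging the (convergent) sum with the continuous inner product gives
$$
\ell_{ij}(\omega) = \langle L_\omega e_j, e_i\rangle = \sum_{k\in\ZZ} c_k(\omega)\,\langle e_{k+j}, e_i\rangle = c_{i-j}(\omega),
$$
which is the asserted formula.

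There is no real obstacle here; the only point that requires a little care is the legitimacy of moving the Fourier sum through the multiplication by $e_j$ and then through the inner product with $e_i$. Both steps are justified by $\fL^2$-continuity: $f\mapsto e_j f$ is an isometry of $\fL^2(S^1)$, and $f\mapsto \langle f,e_i\rangle$ is continuous. The statement is otherwise a direct computation with no measure-theoretic subtleties beyond what has already been set up in the preceding lemmas.
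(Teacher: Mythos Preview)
Your proof is correct and is exactly the direct computation one would expect. The paper in fact states this lemma without proof, treating it as immediate from the definitions; your argument supplies precisely the routine verification that the paper omits.
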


\begin{thm}
A random linear operator $A: \Omega \to \sL(S^1)$ is a Laurent random operator if and only if its matrix coefficients satisfy
$$a_{i+1,j+1} (\omega) = a_{ij} (\omega)$$
for $\omega \in \Omega$.
\end{thm}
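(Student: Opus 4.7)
The plan is to reduce this to the previous theorem characterizing the centralizer $Z(W)$ of the shift. Both directions should follow by computing matrix entries of $A(\omega)W$ and $WA(\omega)$ in the standard basis $\{e_n\}$ and identifying the Toeplitz/Laurent condition on the coefficients with the commutation relation.

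For the forward implication, if $A = L^\varphi$ with $\varphi_\omega = \sum_n c_n(\omega) e_n$, the preceding lemma already supplies the matrix coefficients $a_{ij}(\omega) = \ell_{ij}(\omega) = c_{i-j}(\omega)$. Since these depend only on the difference $i-j$, the relation $a_{i+1,j+1}(\omega) = c_{i-j}(\omega) = a_{ij}(\omega)$ holds for every $\omega$ in $\Omega$.

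For the converse, I would compute, for arbitrary $i,j\in\ZZ$ and $\omega\in\Omega$,
\begin{align*}
\la A(\omega)We_j, e_i\ra &= \la A(\omega)e_{j+1}, e_i\ra = a_{i,j+1}(\omega),\\
\la WA(\omega)e_j, e_i\ra &= \la A(\omega)e_j, W^*e_i\ra = \la A(\omega)e_j, e_{i-1}\ra = a_{i-1,j}(\omega).
\end{align*}
Hence $A(\omega)W = WA(\omega)$ on the orthonormal basis $\{e_n\}$, and therefore on all of $\fL^2(S^1)$, precisely when $a_{i,j+1}(\omega) = a_{i-1,j}(\omega)$ for all $i,j$; by reindexing this is the same as the hypothesis $a_{i+1,j+1}(\omega)=a_{ij}(\omega)$. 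Thus $A(\omega) \in Z(W)$ for every $\omega$, i.e.\ $A\in Z(W)$ in the sense defined before, and the centralizer theorem proved earlier identifies $A$ with $L^\varphi$ for some measurable $\varphi:\Omega\times S^1\to\CC$ with $\varphi_\omega$ bounded.

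I do not anticipate a serious obstacle: the only subtlety is making sure the measurability of $\varphi$ in both variables is supplied, but this is exactly what the centralizer theorem produces, so nothing new needs to be verified here. The argument is essentially Brown--Halmos \cite{BH 63} translated pointwise in $\omega$, with the random-measurability bookkeeping already absorbed into the earlier lemmas.
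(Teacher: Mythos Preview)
Your proposal is correct and follows essentially the same approach as the paper: the forward direction is handled by the preceding lemma on the matrix coefficients of $L^\varphi$, and the converse is obtained by verifying that the coefficient condition $a_{i+1,j+1}(\omega)=a_{ij}(\omega)$ is exactly the statement $\la A(\omega)We_j,e_i\ra=\la WA(\omega)e_j,e_i\ra$, whence $A\in Z(W)$ and the centralizer theorem applies. Your write-up is somewhat more explicit in the index computation, but the strategy is identical.
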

\begin{proof}
After the preceding lemma it remains to show that the condition is sufficient which will follow if we show that $A \in Z(W)$.
But $a_{i+1,j+1} (\omega) = a_{ij} (\omega)$ implies
$$\la A(\omega) W e_j, e_i \ra =  \la WA(\omega) e_j, e_i \ra. $$
\end{proof}

\section{Random Toeplitz operators}

Let
$\fH^2(S^1)$ be the space of square integrable analytic functions on $S^1$. Write $P: \fL^2(S^1) \to \fH^2(S^1)$
for the projection.

Let $\varphi: \Omega  \times S^1 \to \CC$ be measurable
with respect to the product measure. Write
$\varphi_\omega: S^1 \to \RR$ for the function
$\varphi( \omega, \bullet)$. We assume that $\varphi_\omega$ are bounded.
The random Toeplitz operator defined by $\varphi$ is the map
$T=T^\varphi:  \Omega \times \fH^2(S^1)  \to \fH^2(S^1)$  to be given  by
$$T_\omega (u) = P(\varphi_\omega \cdot u), \;\;
u \in \fH^2(S^1).$$
That is $T^\varphi f = PL^\varphi f$ for $ f \in \fH^2(S^1)$.

We can compute the matrix coefficients of $T^\varphi$
 with respect to $\{e_j : j \geq 0\}$ as follows
 \begin{align*} \la T^\varphi (\omega)e_j, e_i \ra &=
 \la PL^\varphi (\omega) e_j, e_i \ra  =\la L^\varphi (\omega) e_j, e_i \ra= \la L^\varphi (\omega) e_{j+1}, e_{i+1}\ra \\
 &= \la PL^\varphi (\omega) e_{j+1}, e_{i+1}\ra
 =\la T^\varphi (\omega)e_{j+1}, e_{i+1} \ra
\end{align*}
Thus we see that the matrix coefficients
$t_{ij}(\omega) = \la T^\varphi (\omega)e_j, e_i \ra $
of
random Toeplitz operator defined by $\varphi$
satisfy
$$t_{i+1, j+1}(\omega)= t_{ij}(\omega).$$

\begin{lem}
Let $A_n: \Omega \to \sL(H)$ be a sequence of random linear operators.
Suppose that for each $\omega \in \Omega$ the sequence of operators
$A_n(\omega)$ has a weak limit $A_\infty^\omega$. Put
 $A_\infty(\omega) = A_\infty^\omega$. Then
 $A_\infty: \Omega \to \sL(H)$ is a  random linear operator.
\end{lem}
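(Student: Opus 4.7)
The plan is to unwind the definition of a random linear operator from the paper: a map $\Phi:\Omega\to\sL(H)$ is a random linear operator precisely when for every pair $x,y\in H$ the scalar function $\omega\mapsto\la\Phi(\omega)x,y\ra$ is measurable on $\Omega$. So what has to be checked for $A_\infty$ is exactly this measurability condition (together with the fact that $A_\infty(\omega)\in\sL(H)$, i.e.\ that the pointwise weak limit is a bounded operator).

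First I would fix arbitrary $x,y\in H$ and consider the scalar functions
$$
\varphi_n(\omega)\ :=\ \la A_n(\omega)x,\,y\ra,\qquad \varphi_\infty(\omega)\ :=\ \la A_\infty(\omega)x,\,y\ra.
$$
By the hypothesis that each $A_n$ is a random linear operator, every $\varphi_n$ is measurable on $\Omega$. By the hypothesis that $A_n(\omega)\to A_\infty(\omega)$ weakly for each $\omega$, we have $\varphi_n(\omega)\to\varphi_\infty(\omega)$ pointwise on all of $\Omega$. Since a pointwise limit of a sequence of measurable $\CC$-valued functions is measurable (apply the standard fact to real and imaginary parts separately), $\varphi_\infty$ is measurable. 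As $x,y$ were arbitrary, this verifies the defining measurability condition for $A_\infty$.

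It remains to address boundedness of $A_\infty(\omega)$, which is needed in order to have $A_\infty(\omega)\in\sL(H)$. For a fixed $\omega$, the weak convergence of $A_n(\omega)$ implies in particular that for every $x\in H$ the sequence $\{A_n(\omega)x\}$ is weakly convergent, hence norm-bounded. Applying the uniform boundedness principle to the family $\{A_n(\omega)\}$ of bounded operators gives $\sup_n\|A_n(\omega)\|<\infty$, and then for all $x,y\in H$,
$$
|\la A_\infty(\omega)x,y\ra|\ =\ \lim_{n\to\infty}|\la A_n(\omega)x,y\ra|\ \leq\ \bigl(\sup_n\|A_n(\omega)\|\bigr)\|x\|\|y\|,
$$
so $A_\infty(\omega)$ is a bounded sesquilinear form and therefore represents a bounded linear operator in $\sL(H)$.

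There is really no serious obstacle here; the argument is a direct verification. The only point that requires a moment's care is the boundedness of $A_\infty(\omega)$, which is why the uniform boundedness principle is invoked pointwise in $\omega$. Measurability itself is immediate from the pointwise-limit-of-measurable-functions principle applied to each matrix coefficient $\la\,\cdot\,x,y\ra$.
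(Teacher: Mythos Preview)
Your argument is correct and follows essentially the same route as the paper: fix $x,y\in H$, use that $\la A_n(\omega)x,y\ra\to\la A_\infty(\omega)x,y\ra$ pointwise on $\Omega$, and conclude measurability of the limit from measurability of the sequence. The paper's proof stops there; your additional paragraph invoking the uniform boundedness principle to confirm $A_\infty(\omega)\in\sL(H)$ is a welcome point of rigor that the paper leaves implicit.
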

\begin{proof}
We put on  $\sL(H)$
 the weak operator topology, which
is defined by the seminorms
$$
T \mapsto \langle Tx,y\rangle
$$
where $x,y$ are vectors in $H$.
In this case, for every $\omega$ in $\Omega$, and $x,y$ in $H$ we have
$$
\langle A_n(\omega) x, y\rangle \longrightarrow \langle A_\infty(\omega)x, y\rangle, \text{ as } n\to \infty.
$$
This says exactly the map $\omega\mapsto \langle A_\infty(\omega)x,y\rangle$ is the pointwise limit of a
sequence of measurable functions.  So it is measurable.
\end{proof}

\begin{thm}
A random linear operator $A: \Omega \to \sL(\fH^2(S^1))$ is a  random Toeplitz operator if and only if its matrix coefficients satisfy
$$a_{i+1,j+1} (\omega) = a_{ij} (\omega)$$
for $\omega \in \Omega$ and $i, j \geq 0$.
\end{thm}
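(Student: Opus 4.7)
The forward direction is already verified by the matrix-coefficient computation displayed immediately above the theorem, so the plan is to prove the converse: given a random linear operator $A:\Omega\to\sL(\fH^2(S^1))$ whose matrix coefficients satisfy $a_{i+1,j+1}(\omega)=a_{ij}(\omega)$ for all $i,j\geq 0$ and all $\omega\in\Omega$, construct a measurable $\varphi:\Omega\times S^1\to\CC$ with $\varphi_\omega$ essentially bounded such that $A=T^\varphi$.

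The strategy is the Brown--Halmos dilation trick, lifted to the random setting via the weak-limit lemma and the Laurent characterization just established. First I would extend $A(\omega)$ to an operator $\tilde A(\omega)\in\sL(\fL^2(S^1))$ by setting it to zero on $(\fH^2(S^1))^\perp$; measurability of $\tilde A$ as a random operator is immediate from that of $A$. Writing $W$ for the bilateral shift on $\fL^2(S^1)$, define
$$
B_n(\omega) := W^{*n}\,\tilde A(\omega)\,W^n, \qquad n=1,2,\ldots.
$$
For each fixed $i,j\in\ZZ$ and all $n$ large enough that $i+n,j+n\geq 0$, the matrix coefficient $\la B_n(\omega)e_j,e_i\ra = \la\tilde A(\omega)e_{j+n},e_{i+n}\ra = a_{i+n,j+n}(\omega)$ is constant in $n$ by the hypothesized shift invariance. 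Moreover $\|B_n(\omega)\|=\|\tilde A(\omega)\|=\|A(\omega)\|$ is uniformly bounded in $n$ for each fixed $\omega$.

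From this uniform bound and the eventual constancy of matrix coefficients on the total set $\{e_j\}_{j\in\ZZ}$, a standard density argument produces a bounded operator $L(\omega)\in\sL(\fL^2(S^1))$ such that $B_n(\omega)\to L(\omega)$ in the weak operator topology for every $\omega$. The preceding lemma on weak limits of random operators then shows that $L:\Omega\to\sL(\fL^2(S^1))$ is itself a random linear operator. A direct computation $W^*L(\omega)W=\lim W^{*(n+1)}\tilde A(\omega)W^{n+1}=L(\omega)$ shows $L(\omega)W=WL(\omega)$, so $L\in Z(W)$. By the random Laurent characterization established earlier, $L=L^\varphi$ for a measurable $\varphi:\Omega\times S^1\to\CC$ with $\varphi_\omega$ bounded (boundedness following from $\|\varphi_\omega\|_\infty=\|L^\varphi(\omega)\|\leq\|A(\omega)\|$).

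Finally, to verify $A=T^\varphi$, I would compare matrix coefficients: for $i,j\geq 0$,
$$
\la T^\varphi(\omega)e_j,e_i\ra = \la PL^\varphi(\omega)e_j,e_i\ra = \la L(\omega)e_j,e_i\ra = \lim_n a_{i+n,j+n}(\omega) = a_{ij}(\omega),
$$
where the last equality uses the Toeplitz condition. The main obstacle in carrying this out is step that produces $L(\omega)$ as a genuine bounded operator rather than merely a collection of limiting matrix entries; this is where the uniform norm bound $\|B_n(\omega)\|=\|A(\omega)\|$ (a consequence of $W$ being unitary on $\fL^2(S^1)$) is essential, since without it weak convergence on the basis would not upgrade to weak operator convergence on all of $\fL^2(S^1)$. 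Everything else reduces to bookkeeping with the Fourier expansion of $\varphi_\omega$ and the matrix-coefficient lemma for Laurent operators already in hand.
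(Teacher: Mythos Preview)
Your argument is correct and is precisely the Brown--Halmos dilation argument the paper is set up to use: the paper states this theorem without an explicit proof, but the weak-limit lemma placed immediately before it and the random Laurent characterization $Z(W)=\{L^\varphi\}$ proved just earlier are exactly the two ingredients your proof invokes, so your proposal matches the paper's evident intended route.
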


Given a sequence $\{ c_n : -\infty < n < \infty \}$ of complex valued random variables on $\Omega$.  We can form an infinite matrix $T$ whose
$(i,j)$-th entry is $t_{ij}= c_{i-j}$. We can write down the first
$(n+1) \times (n+1)$ submatrix as
$$T_n = \begin{pmatrix}
c_{0} & c_{-1} & c_{-2} &\dots  & c_{-n} \\
c_{1} & c_{0} &c_{-1} &\dots  & c_{-(n-1)} \\
c_{2} &c_{1} & c_{0} &\dots &c_{-(n-2)}\\
\hdotsfor{5}\\
c_{n-2} & c_{n-3}& c_{n-4} &\dots  & c_{-2}\\
c_{n-1} & c_{n-2}& c_{n-3} &\dots  & c_{-1}\\
c_{n} & c_{n-1}& c_{n-2} &\dots  & c_{0}
\end{pmatrix}
$$
We shall always assume that $T_n$ is hermitian.



Let $(\Omega, \mu)$ be a probability space.
Let $f(\omega, x)$ such that $f(\omega, \bullet)$  is in $L^1(S^1)$
for a.e. in $\omega \in \Omega$. We can consider its Fourier coefficient
$$c_n(\omega) = \frac{1}{2 \pi}\int_{-\pi}^{\pi}
e^{-inx}f(\omega, x)dx.$$
We use these coefficients to build the finite Toeplitz forms
\begin{align*}
T_n(f)(\omega)&=\sum_{0 \leq j, k \leq n} c_{k-j} (\omega) u_j \bar{u}_k\\
&=\frac{1}{2 \pi}\int_{-\pi}^{\pi}
|u_0 + u_1 e^{ix}+ \cdots+ u_ne^{inx}|^2
 f(\omega, x)dx
\end{align*}
- this is a hermitian form in the variables $u$.

The eigenvariables of the Hermitian form
$T_n(f)(\omega)$ are defined as the roots of the characteristic equation $\det (T_n(f) - \lambda)=0$; we denote them by
$$\lambda^{(n)}_1(\omega), \ldots, \lambda^{(n)}_{n+1}(\omega)$$
These are real valued random variables defined a.e. in $\Omega$.

If we assume that (1) $m \leq f(x, w) \leq M$ for all $x \in S^1$ and a.e. $\omega$ and that
(2)
$$I = \sum_{p=0}^n |u_p|^2=
\frac{1}{2 \pi}\int_{-\pi}^{\pi}
|u_0 + u_1 e^{ix}+ \cdots+ u_ne^{inx}|^2
 dx =1
$$
Then it follows from the definition of
$T_n(f)(\omega)$ that
$$m \leq T_n(f)(\omega) \leq M.$$

To each eigenvariable $\lambda^{(n)}_j(\omega)$ we have a nonvanishing eigenvector
$u=(u_{j0}, \ldots, u_{jn})$ determined up to a scalar, such that
$$T_n(f)(\omega) = \lambda^{(n)}_j(\omega)\sum_{p=0}^n |u_p|^2 = \lambda^{(n)}_j(\omega)$$
for our choice of $u$. Thus we see that
$$m \leq \lambda^{(n)}_j(\omega) \leq M.$$

\begin{thm}
The matrices $(\lambda^{(n)}_j(\omega) )$ and
$(f(\omega, -\pi + \frac{2j\pi}{n+2}))$ are equally distributed.
\end{thm}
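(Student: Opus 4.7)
I would invoke the equidistribution proposition of Section 3 to reduce the theorem to checking condition (a) for the monomials $F(t)=t^k$, $k=0,1,2,\ldots$. The hypothesis $m\leq f(\omega,x)\leq M$ forces $\lambda^{(n)}_j(\omega)\in[m,M]$ (as noted immediately before the theorem) and trivially $f(\omega,-\pi+2j\pi/(n+2))\in[m,M]$, so the uniform-boundedness required by the proposition holds with the \emph{constant} bound $K=\max(|m|,|M|)$ independent of $\omega$. The task then reduces to proving, for each $k\geq 0$ and a.e.\ $\omega\in\Omega$,
$$
\frac{1}{n+1}\sum_{j=1}^{n+1}\Bigl[\bigl(\lambda^{(n)}_j(\omega)\bigr)^k-f\!\left(\omega,-\pi+\tfrac{2j\pi}{n+2}\right)^{\!k}\Bigr]\longrightarrow 0.
$$

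\textbf{Step 1: both averages share the same limit.} I would prove separately that each Cesàro average above converges pointwise a.e.\ to $\frac{1}{2\pi}\int_{-\pi}^{\pi}f(\omega,x)^k\,dx$. For the sample-point side this is the Riemann-sum identity of mesh $2\pi/(n+2)$, applied for each $\omega$ at which $f(\omega,\cdot)$ is Riemann-integrable on $S^1$. For the spectral side, since $T_n(f)(\omega)$ is Hermitian,
$$
\frac{1}{n+1}\sum_{j=1}^{n+1}\!\bigl(\lambda^{(n)}_j(\omega)\bigr)^k=\frac{1}{n+1}\tr\bigl(T_n(f)(\omega)^k\bigr)=\frac{1}{n+1}\sum_{i_1,\ldots,i_k=0}^{n}c_{i_1-i_2}(\omega)\cdots c_{i_k-i_1}(\omega).
$$
Substituting $c_m(\omega)=\frac{1}{2\pi}\int e^{-imx}f(\omega,x)\,dx$, exchanging the finite sums with the integrals (justified by the uniform bound $|c_m(\omega)|\leq M$), and regrouping yields an integral whose kernel is a product of Dirichlet-type sums $\sum_{i=0}^{n}e^{-ii\theta}$. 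Dividing by $n+1$ turns these into Fejér kernels, which form an approximate identity; passing to the limit pointwise in $\omega$ gives the desired integral. This is precisely the classical Grenander--Szeg\"o first limit theorem \cite{GS 58} applied, \emph{for each fixed $\omega$}, to the deterministic bounded symbol $x\mapsto f(\omega,x)$.

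\textbf{Step 2: patching on a single full-measure set.} For each $k$ the pointwise convergence of either average fails only on a null set $N_k$. Since countably many null sets have null union, outside $\bigcup_{k\geq 0}N_k$ condition (a) of the equidistribution proposition holds for every $k$ simultaneously. The proposition then upgrades this to $(\ref{eq:heart})$ for every continuous real-valued test function, which is the definition of ``equally distributed,'' yielding the theorem.

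\textbf{Main obstacle.} The substantive point is not the trace computation itself---that is the deterministic Szeg\"o theorem applied $\omega$ by $\omega$---but the uniformity needed to convert those pointwise statements into an almost-everywhere assertion on $\Omega$. This is clean here precisely because the constants $m,M$ do not depend on $\omega$, and Theorem~\ref{thm:order-eigen} guarantees that the functions $\omega\mapsto\lambda^{(n)}_j(\omega)$ are themselves measurable (so the Cesàro average on the spectral side is a single measurable object, not just a family of pointwise numbers). A secondary care point is ensuring $f(\omega,\cdot)$ is regular enough (Riemann-integrable, e.g.\ continuous) a.e.\ in $\omega$ to justify the Riemann-sum limit on the sample-point side; this regularity is implicit in the preceding discussion of $f$.
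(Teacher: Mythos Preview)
Your argument is correct, but it takes a different route from the paper's. You invoke condition (a) of the equidistribution proposition and verify it via the trace/moment identity $\frac{1}{n+1}\sum_j(\lambda^{(n)}_j)^k=\frac{1}{n+1}\tr(T_n(f)^k)\to\frac{1}{2\pi}\int f^k$, which is indeed the deterministic first Szeg\H{o} limit theorem applied pointwise in $\omega$. The paper instead goes through condition (b): using the determinant identity $\prod_j(1+x\lambda^{(n)}_j)=\det(I+xT_n(f))=D_n(1+xf)$, the minimal-value theorem for orthogonal polynomials (GS \S2.2) gives $D_n(g)/D_{n-1}(g)\to\exp\bigl(\frac{1}{2\pi}\int\log g\bigr)$ for $g=1+xf$, hence $\frac{1}{n+1}\sum_j\log(1+x\lambda^{(n)}_j)\to\frac{1}{2\pi}\int\log(1+xf)$, and the Riemann-sum side is immediate.

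The two approaches trade off cleanly. The determinant/orthogonal-polynomial route is the one actually in Grenander--Szeg\H{o} \S5.2; it ties the result to the extremal problems for polynomials and opens the door to the strong Szeg\H{o} theorem, but it needs $f>0$ (so that $\log$ is defined) and imports the whole Chapter 2 machinery. Your moment route is more self-contained and does not require positivity of $f$, only boundedness; on the other hand, the Fej\'er-kernel step you sketch is a little loose as written (the kernels arising from $\sum_{i=0}^n e^{ii\theta}$ are Dirichlet-type rather than Fej\'er, so the cleanest execution is the standard two-step: verify the trace limit first for trigonometric polynomials by direct counting, then approximate bounded $f$ uniformly). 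Either way the conclusion is the same, and your care about measurability of $\lambda^{(n)}_j(\omega)$ and about patching null sets over $k$ is exactly right.
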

\begin{proof}
If we take $F(t) = \log t$, $t>0$, and make use of a Riemann sum, then we need to show
$$\lim_{n \to \infty} \frac{1}{n+1} \sum_{j =1}^{n+1}
\log \lambda^{(n)}_j(\omega)  = \frac{1}{2 \pi}
\int_{-\pi}^{\pi} \log f(\omega, x) dx$$

Suppose $n >0$. Let $D_n(f)$ be the determinant of $T_n(f)$. Then
$$D_n(f)(\omega) = \lambda^{(n)}_1(\omega) \cdots \lambda^{(n)}_{n+1}(\omega).$$

So we need to show
$$\lim_{n \to \infty} (D_n(f)(\omega))^{ \frac{1}{n+1}}=
\exp (\frac{1}{2 \pi}
\int_{-\pi}^{\pi} \log f(\omega, x) dx).
$$
But this follows from the minimal value theorem
$$\lim_{n \to \infty} \frac{D_n(f)(\omega)}{D_{n-1}(f)(\omega)} =
\exp (\frac{1}{2 \pi}
\int_{-\pi}^{\pi} \log f(\omega, x) dx).$$
The proof of the minimal value theorem for orthogonal polynomials given in  \cite[\S 2.2]{GS 58} extends easily to our situation.
\end{proof}

\bigskip

\begin{center} Part III.
\end{center}

\section{Circulant preconditioner}

We want to solve efficiently a linear system of the form
$T_n \mathbf{x} = \mathbf{b}$. The method of pre-conditioning means that we find a matrix $S_n$ such that
it is efficient to solve the linear system
$S_n^{-1}T_n \mathbf{x} = S_n^{-1}\mathbf{b}$.
The conjugate gradient method used to solve the above linear system says that the more the eigenvalues
of the coefficient matrix  $S_n^{-1}T_n$
are clustered together the faster the convergence rate.
 A sequence of matrices $\{A_n\}_1^\infty$ is
said to have clustered spectra around $1$ if for any
given $\epsilon > 0$ there exist positive integers
$n_1$ and $n_2$ such that for all $n > n_1$, at most
$n_2$ eigenvalues of the matrix $A_n -I_n$ have absolute value larger than $\epsilon$.

This leads to the problem of the distribution of the eigenvalues of the matrix $S_n^{-1}T_n$ when $S_n$ is the Strang circulant in the paper \cite{ChR 89} of R. Chan.
The basic technique is due to Szeg\"o which assumes that
there is a function $f$ on the circle $S^1$ such that the given sequence $\{ c_n\}$ is the sequence of Fourier coefficients of $f$ and then the method of orthogonal polynomials is applied. The function $f$ is then called the generating function of the sequence $\{ c_n\}$.

Let $(\Omega, \mu)$ be a probability space.
Let $f(\omega, x)$ be a real-valued function on
$\Omega \times S^1 $ such that $f(\omega, \bullet)$  is in $L^1(S^1)$ for a.e. in $\omega \in \Omega$. We can consider its Fourier coefficient
$$c_n(\omega) = \frac{1}{2 \pi}\int_{-\pi}^{\pi}
e^{-inx}f(\omega, x)dx.$$

We use these coefficients to build the $(n+1) \times (n+1)$ Toeplitz
matrix $T_n$ whose
 $(k,j)$ entry $(T_n)_{k,j}$  is $c_{k-j}$.
We have written $c_{k-j}$ for the function $c_{k-j}(w)$.
We shall consider the case that he $T_n$ are Hermitian positive definite a.e. in $\omega$.

The Strang preconditioner $S_n =(s_{k,j})$ of $T_n$ is the Hermitian circulant  defined as follows in two cases.

If $n=2m+1$ then $s_{k,j} = s_{k-j}$ are given by
$$ s_\ell =
 \begin{cases} c_\ell &\text{for} \; 0 \leq \ell \leq m, \\
 c_{\ell -n} &\text{for} \; m < \ell \leq n-1 \\
 \bar{s}_{-\ell} &\text{for} \; 0 < -\ell \leq n-1.
\end{cases}$$

If $n=2m$ then $s_{k,j} = s_{k-j}$ are given by
$$ s_\ell =  \begin{cases}
c_\ell &\text{for} \; 0 \leq \ell \leq m -1, \\
0 \;\text{or}\; \frac{1}{2}(c_m + c_{-m}) &\text{for} \; \ell = m,\\
c_{\ell -n} &\text{for} \; m < \ell \leq n-1 \\
\bar{s}_{-\ell} &\text{for} \; 0 < -\ell \leq n-1.
\end{cases}$$

\begin{thm} Write the sup-norm as
$\Vert c_n \Vert_\infty = \sup_{w \in \Omega} |c_n(\omega)|$.
Suppose $f$ is positive and
$\sum_n \Vert c_n \Vert_\infty$ is finite (Wiener class). Then for all
$\epsilon > 0$, there exist $M$ and $N > 0$ such that for all $n > N$, at most
$M$ eigenvalue functions of $S_n - T_n$ have sup norms exceeding
$\epsilon$.
\end{thm}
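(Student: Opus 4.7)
The plan is to transport R.~Chan's clustering argument \cite{ChR 89} into the random-operator framework of Parts I and II, using Theorem \ref{thm:order-eigen} to order the eigenvalue functions of the Hermitian random matrix $S_n-T_n$ measurably on $\Omega$ and Weyl's Lemma \ref{lem:Weyl} to split its spectrum into a low-rank part and a small-norm part. The uniform sup-norm Wiener hypothesis $\sum_\ell\|c_\ell\|_\infty<\infty$ replaces deterministic control of a single generating function and lets every estimate run pointwise in $\omega$.

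Given $\epsilon>0$, I choose $p\in\NN$ so that $2\sum_{|\ell|>p}\|c_\ell\|_\infty<\epsilon/2$ and set $N:=2p+2$. For $n\geq N$ the central band $|k-j|\leq m$ of $T_n-S_n$ vanishes by the definition of Strang's circulant, and the nonzero entries on the outer diagonals equal $c_{k-j}(\omega)-c_{k-j\pm n}(\omega)$. Let $W(\omega)$ be the Toeplitz tail with entries $c_{k-j}(\omega)1_{|k-j|>m}$, and let $V(\omega)=\sum_{r=1}^m \bigl(c_{-r}(\omega)E_r+c_r(\omega)E_r^*\bigr)$, where $E_r$ is the partial isometry with ones on the lower-left corner anti-diagonal of length $r+1$ at offset $n-r$. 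Split $V=V^{(1)}+V^{(2)}$ at $r=p$ and set $R_n:=-V^{(1)}$ and $E_n:=W-V^{(2)}$, so $T_n-S_n=R_n+E_n$ at each $\omega$. Since $V^{(1)}$ is supported in two $(p+1)\times(p+1)$ corner blocks, $\operatorname{rank} R_n(\omega)\leq M:=2(p+1)$ uniformly in $\omega$; and since each $E_r$ is a partial isometry and the truncated Toeplitz symbol gives $\|W(\omega)\|\leq\sum_{|\ell|>m}|c_\ell(\omega)|$,
\[
\|E_n(\omega)\|\leq\sum_{|\ell|>m}|c_\ell(\omega)|+\sum_{r>p}\bigl(|c_r(\omega)|+|c_{-r}(\omega)|\bigr)\leq 2\sum_{|\ell|>p}\|c_\ell\|_\infty<\epsilon/2.
\]

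By Theorem \ref{thm:order-eigen}, the eigenvalue functions of the Hermitian random matrices $T_n-S_n$ and $R_n$ can be ordered as measurable $\mu_1(\omega)\leq\cdots\leq\mu_{n+1}(\omega)$ and $\rho_1(\omega)\leq\cdots\leq\rho_{n+1}(\omega)$. Applying Lemma \ref{lem:Weyl} with $A=R_n$ and $B=E_n$ gives $|\mu_k(\omega)-\rho_k(\omega)|\leq\|E_n(\omega)\|<\epsilon/2$ for every $k$ and $\omega$. Because $R_n$ has rank at most $M$ everywhere on $\Omega$, at least $n+1-M$ of the $\rho_k$ vanish identically, and for those indices $|\mu_k(\omega)|<\epsilon/2$ everywhere, so $\|\mu_k\|_\infty<\epsilon$; hence at most $M$ eigenvalue functions of $S_n-T_n$ exceed $\epsilon$ in sup norm for every $n\geq N$. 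The main obstacle is the bookkeeping in the decomposition step: it has to be done separately in the two parity cases of Strang's definition (fixing the ambiguous middle entry $s_m$ for $n$ even in one branch throughout), and the shifted partial-isometry shape of each outer-diagonal contribution must be verified so that the Wiener tail really controls $\|E_n(\omega)\|$ uniformly in $\omega$; once that bookkeeping is in place, the rank-versus-norm trade-off combined with the measurable ordering of Theorem \ref{thm:order-eigen} and Weyl's inequality runs through the classical Chan template almost mechanically.
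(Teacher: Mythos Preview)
Your argument is correct and runs through the same low-rank-plus-small-norm template that the paper invokes by citing Chan; the difference is only in the spectral tool. The paper's one-line proof points specifically to the Cauchy interlacing theorem (Theorem~\ref{thm:Cauchy}), which it set up in Part~I precisely for this purpose: delete the first and last $p$ rows and columns of $S_n-T_n$, observe that the remaining principal submatrix has operator norm controlled by the Wiener tail uniformly in $\omega$, and interlace. You instead write $T_n-S_n=R_n+E_n$ explicitly and apply Weyl's Lemma~\ref{lem:Weyl}. Both routes are standard in the Toeplitz preconditioning literature; yours has the mild advantage that the rank bound on $R_n$ is uniform in $\omega$ by construction, which makes the passage from pointwise clustering to a statement about sup norms of the ordered eigenvalue \emph{functions} more transparent than the interlacing version. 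One step to tighten: ``$\operatorname{rank}R_n(\omega)\leq M$ everywhere, hence at least $n{+}1{-}M$ of the $\rho_k$ vanish identically'' does not follow from the rank bound alone, since the split of the nonzero eigenvalues of $R_n(\omega)$ into positives and negatives could in principle vary with $\omega$ and shift which ordered indices carry the zeros. It does hold here because your $R_n=-V^{(1)}$ has the off-diagonal block shape $\bigl(\begin{smallmatrix}0&C^*\\C&0\end{smallmatrix}\bigr)$, so its nonzero eigenvalues come in $\pm$ pairs and $\rho_k(\omega)=0$ for $p{+}1\leq k\leq n{+}1{-}p$ uniformly in $\omega$; alternatively, replacing $M$ by $2M$ in the conclusion costs nothing since the theorem only asserts the existence of some $M$.
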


This is theorem 2 of \cite{ChR 89}; see also theorem 4 \S 8 of
\cite{ChS 89}. The proof of Chan works for we have Cauchy Interlace theorem
in our case.

\section{Numerical results}

To test our preconditioners we shall compute using random phases generated by random sequences in the following manner.
Let $\NN$ be the set of all integers $\geq 0$. The generating function will be $f : \NN \times S^1 \to \CC$ with Fourier series
$$f = \sum_{k= - \infty}^\infty c_k(t) e^{-ik \theta}. $$
We shall take a simple example of a random variable by choosing $c_k(t) = a_k  e^{i \phi_k (t)}$.
For $a_k$ we shall take the example as given in \cite{ChR 89} namely
$$ a_k =  \begin{cases} \frac{1 + \sqrt{-1}}{(1 +k)^{1.1}}, \;\; & k > 0, \\
2 \;\; & k =0\\
\bar{a}_{-k},  \;\; & k< 0.
\end{cases}
$$
For each $k > 0$
we take random sequences $\{\phi_k (t): t = 0,1,2, \ldots \}$.
Note that $f$ remains in the Wiener class. The $c_k(t)$ are used to build the Toeplitz matrices $T_n(t)$.

We choose the circulant preconditioner $S_n$ as above and run the PCG for $T_n x = b$
as given in \cite{ChS 89} p. 106, namely, start from $x_0 =0$ and $r_0 =b$. Solve
\begin{align*}
Sz_{j-1} &= r_{j-1} \\
\beta_j &= z^T_{j-1} r_{j-1}/z^T_{j-2}r_{j-2} \\
d_j &= z_{j-1} + \beta_j d_{j-1} \\
\alpha_{j-1} &= z^T_{j-1} r_{j-1}/ d^T_j T d_j \\
x_j &= x_{j-1} + \alpha_j d_j \\
r_j &= r_{j-1} - \alpha_j T d_j.
\end{align*}

 We input random sequences $\phi_k(t)$ which are Gaussian $N(0,1)$.
 Write $r_j$ for the residue after $j$ iterations.
 We calculate
 the number $I(n, t)$ of iterations required in order that Strang's circulant PCG applied to
 $T_n(t)$ achieve the residue ratios
 $$\frac{\| r_j \|_2}{\| r_j \|_0} < {10}^{-10}.$$

\begin{figure}[H]
\begin{tikzpicture}[y=.04cm, x=.06cm,font=\sffamily]
\begin{axis}[xlabel=$t$, ylabel=iterations,legend style={legend pos=north east}]
	\addplot [color=blue,mark=*,mark size=1pt,dotted] table[x=try,y=iter] {A.data};
    \addlegendentry{$T_n$}
	\addplot [color=red,mark=*,mark size=1pt] table[x=try,y=iter-p] {A.data};
    \addlegendentry{$S_n^{-1}T_n$}
\end{axis}
\end{tikzpicture}
\caption{\small Number of iterations for $n=65$, $1\leq t\leq 100$.}\label{fig1}
\end{figure}
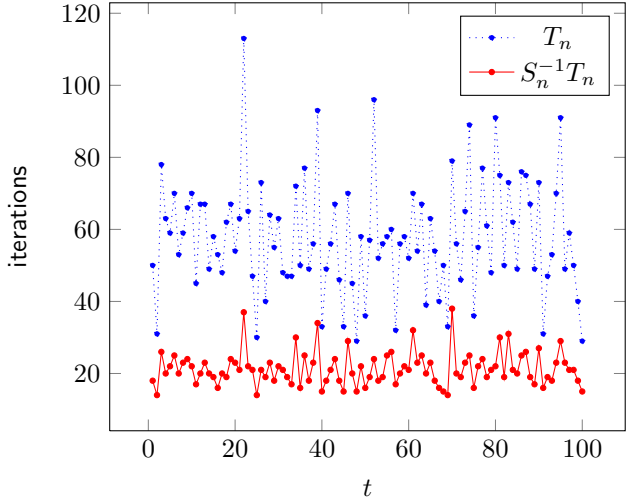

In Figure~\ref{fig1} we display the results for $n =65$ i.e. a $65 \times 65$ Toeplitz matrix $T_{65}(t)$.
The $X$-axis shows the value of $t$ and the $Y$-axis shows the number of iterations. The graph above is for
the usual conjugate gradient method for
$T_n$ and the graph below is that of $I(65, t)$
for the preconditioned $S_N^{-1}T_n$ which shows a mean value of around $20$ iterations.

\begin{figure}[H]
\begin{tikzpicture}[y=.04cm, x=.06cm,font=\sffamily]
\begin{axis}[xlabel=$n$, ylabel=iterations,legend style={legend pos=north west}]
	\addplot [color=blue,mark=*,mark size=1pt,dotted] table[x=n,y=iter] {B.data};
    \addlegendentry{$T_n$}
	\addplot [color=red,mark=*,mark size=1pt] table[x=n,y=iter-p] {B.data};
    \addlegendentry{$S_n^{-1}T_n$}
\end{axis}
\end{tikzpicture}
\caption{\small Average number of iterations with $1 \leq t \leq 50$  for each $n$ where $n=2m+1$, $10\leq m\leq 120$.}\label{fig2}
\end{figure}
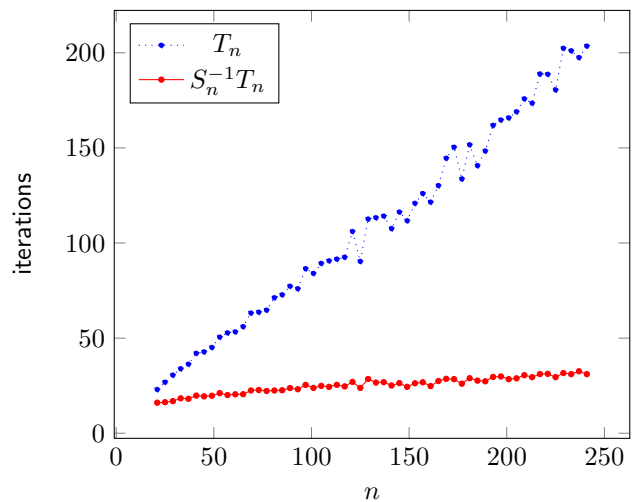

Figure~\ref{fig2} shows the variation of the average number of required iterations with the size of the Toeplitz matrices.
Here the $X$ axis is $n$ to indicate that the Toeplitz matrix is $n \times n$. We take $n$ to be odd $n = 2m+1$ and we run for $10  \leq m \leq 120$. The $Y$ axis gives the average  number of required iterations. The lower graph shows
the average
$\frac{1}{50} \sum_{t=1}^{50} I (n, t)$

\begin{figure}[H]
\begin{tikzpicture}[y=4cm, x=3cm, z=0.5cm]
\begin{axis}[xlabel=$x$, ylabel=$y$, zlabel=$t$]
	\addplot3+[only marks,mark size=1pt,scatter] table {sheets.data};
\end{axis}
\end{tikzpicture}
\caption{\small Eigenvalues.}\label{fig3}
\end{figure}
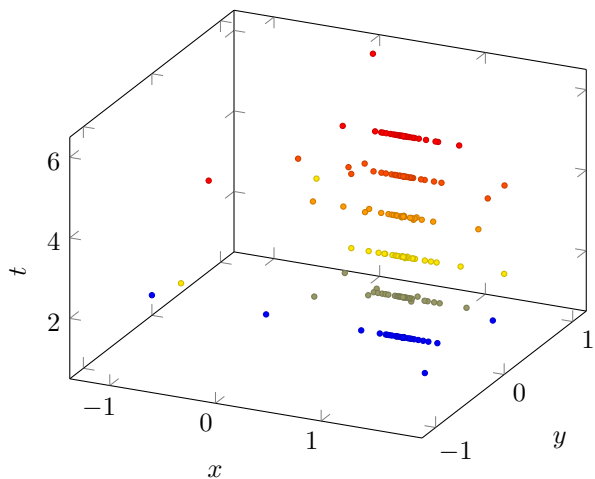

Just to confirm the clustering of the eigenvalues we show the eigenvalues of
$S_n^{-1}T_n$ in Figure~\ref{fig3}. The eigenvalues $x + \sqrt{-1}y$ are displayed as
$(x,y)$ for $t = 1,2, \ldots,6$.

\end{document}